\documentclass[11pt]{article}

\usepackage[utf8]{inputenc}
\usepackage[T1]{fontenc}
\usepackage[top=2.5cm,bottom=2.5cm,left=2.5cm,right=2.5cm]{geometry}
\usepackage{amsmath,amssymb,amsthm}
\usepackage{tikz}
\usepackage[colorlinks=true,linkcolor=red,citecolor=blue,urlcolor=magenta]{hyperref}
\allowdisplaybreaks[4]

\newtheorem{theorem}{Theorem}[section]
\newtheorem{lemma}[theorem]{Lemma}
\newtheorem{problem}[theorem]{Problem}
\newtheorem{definition}[theorem]{Definition}
\newtheorem{claim}[theorem]{Claim}

\newcommand{\tw}{\operatorname{tw}}
\newcommand{\argmax}{\operatorname*{arg\,max}}
\newcommand{\red}{\mathrm{red}}
\newcommand{\blue}{\mathrm{blue}}

\begin{document}

\title{Two-color Ramsey lower bounds for bounded degree hypergraphs}
\author{
Chunchao Fan\footnote{School of Mathematical Sciences, University of Science and Technology of China, Hefei, 230026, P.~R.~China. Email: {\tt 1807951575@qq.com}. This work was initiated while the author was at the Center for Discrete Mathematics, Fuzhou University, Fuzhou, 350108, P.~R.~China.}
\qquad
Qizhong Lin\footnote{Center for Discrete Mathematics, Fuzhou University, Fuzhou, 350108, P.~R.~China. Email: {\tt linqizhong@fzu.edu.cn}.}
}
\date{}
\maketitle

\begin{abstract}
We consider Ramsey numbers of bounded-degree uniform hypergraphs. In particular, we prove that for every $k\ge3$, there exists a constant $c_k>0$ such that, for all sufficiently large $\Delta$ and every $n\ge2^\Delta$, there is a $k$-uniform $n$-vertex hypergraph $H$ with maximum degree at most $\Delta$ satisfying
\[
        r(H)\ge \tw_{k-1}\!\bigl(c_k\Delta\log\log\Delta\bigr)\,n.
\]
Here $\tw_j$ denotes the tower function of height $j$. This constitutes the first progress towards a problem posed by Conlon, Fox and Sudakov.

\end{abstract}

\section{Introduction}\label{sec:introduction}

Ramsey theory, originating from Ramsey's seminal work~\cite{Ramsey}, studies the emergence of structure in large colored sets. A central object of study is the Ramsey number $r(H)$, defined as the smallest integer $N$ such that every red/blue coloring of the edges of the complete $k$-uniform hypergraph $K_N^{(k)}$ contains a monochromatic copy of $H$. Determining the growth of $r(H)$ for various $H$ has been a driving force in combinatorics.

For complete graphs, the classical bounds $2^{n/2}<r(K_n)<4^n$ were established by Erd\H{o}s~\cite{Erdos1947} and by Erd\H{o}s and Szekeres~\cite{ES}, with recent breakthroughs~\cite{CGMS,Gupta} improving the upper bound. For $k$-uniform hypergraphs with $k\ge3$, the situation is drastically different: the Ramsey number $r(K_n^{(k)})$ exhibits tower-type growth,
\[
        \tw_{k-1}(\Omega_k(n^2))
        \le r(K_n^{(k)})
        \le \tw_k(O(n)),
\]
where the lower bound follows from the stepping-up lemma of Erd\H{o}s and Hajnal (see, e.g.,~\cite{GRS}) and the upper bound from Erd\H{o}s and Rado~\cite{ERado}.

A major line of research concerns Ramsey numbers of graphs and hypergraphs with bounded maximum degree. For graphs, a foundational result of Chv\'atal, R\"odl, Szemer\'edi and Trotter~\cite{CRST} shows that bounded-degree graphs have linear Ramsey numbers. The dependence on the maximum degree was subsequently refined, and the best general upper bound currently known is $2^{O(\Delta\log\Delta)}n$ by Conlon, Fox and Sudakov~\cite{CFS12}.

For hypergraphs, the linearity phenomenon persists. Cooley, Fountoulakis, K\"uhn and Osthus~\cite{CFKO1,CFKO2} and, independently, Nagle, Olsen, R\"odl and Schacht~\cite{NORS} proved that, for every fixed $k$ and $\Delta$, there exists a constant $C=C(k,\Delta)>0$ such that every $n$-vertex $k$-graph $H$ of maximum degree at most $\Delta$ satisfies
\[
        r(H)\le Cn.
\]
More recently, Conlon, Fox and Sudakov~\cite{CFS09} proved, using dependent random choice, that for every fixed $k\ge4$ there exists a constant $c=c(k)>0$ such that
\[
        r(H)\le \tw_k(c\Delta)n,
\]
and that, in the $3$-uniform case, there exists an absolute constant $c>0$ such that
\[
        r(H)\le \tw_3(c\Delta\log\Delta)n.
\]

In the opposite direction, Conlon, Fox and Sudakov~\cite{CFS09} posed the following natural problem.

\begin{problem}\label{problem:CFS}
Is it true that for every $k\ge3$ and $\Delta$, and for all sufficiently large $n$, there exists a $k$-uniform hypergraph $H$ with maximum degree at most $\Delta$ and $n$ vertices such that
\[
        r(H)\ge \tw_k(c\Delta)n,
\]
where $c>0$ depends only on $k$?
\end{problem}

The key new ingredient first appears at the $3$-uniform base level.

\begin{theorem}\label{thm:base-three}
There exists an absolute constant $c>0$ such that, for all sufficiently large integers $\Delta$ and every $n\ge2^\Delta$, there exists a $3$-uniform $n$-vertex hypergraph $H$ with maximum degree at most $\Delta$ such that
\[
        r(H)\ge 2^{c\Delta\log\log\Delta}\,n.
\]
\end{theorem}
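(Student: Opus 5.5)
Theorem~\ref{thm:base-three} is a lower bound, so we must exhibit a hypergraph $H$ and, for $N=2^{c\Delta\log\log\Delta}n$, a red/blue coloring of $K_{N-1}^{(3)}$ with no monochromatic copy of $H$. We take $H$ to be a disjoint union of copies of a fixed gadget $F$ on $m$ vertices, plus isolated vertices to reach exactly $n$ vertices. The factor $n$ then comes almost for free. Suppose we have a coloring $\chi$ of $K_M^{(3)}$ with no monochromatic $F$, and we blow it up: replace each vertex of $K_M^{(3)}$ by a set of size about $n/m$ and color each triple by $\chi$ of the triple of parts it meets. Triples meeting fewer than three distinct parts get a fixed color, red say. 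The blow-up has about $Mn/m$ vertices. A monochromatic copy of $H$ would have to place some copy of $F$ with all its vertices in distinct parts, because a part holds only about $n/m$ vertices. That copy would project to a monochromatic $F$ in $\chi$, which is impossible. For the blue color this is clean. For red we must also rule out copies of $F$ that use triples inside a part, so $F$ should be chosen with no red copy that fits in few parts, for example by making $F$ connected and dense enough. The requirement $n\ge 2^\Delta$ is what allows $n/m\ge1$ with $m$ about $2^{\Theta(\Delta)}$. The task therefore reduces to finding a gadget $F$ with maximum degree $\Delta$ and $m\le n$ vertices such that $r(F)\ge 2^{c\Delta\log\log\Delta}m$.

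For the gadget we use a stepping-up construction from a graph coloring. Take a graph $G$ with maximum degree about $\sqrt{\Delta}$ and graph Ramsey number $r(G)\ge 2^{c'\sqrt\Delta}$, or, better, one of the known bounded-degree graphs with $r(G)\ge 2^{c\Delta}\,|G|$. We then apply the Erd\H{o}s--Hajnal stepping-up coloring on $\{0,1\}^t$. A triple $a<b<c$ is colored by applying the graph coloring to the pair $(\delta(a,b),\delta(b,c))$, and monotone patterns of $\delta$ are handled in the usual way. The $3$-graph $F$ is chosen so that any monochromatic copy of $F$ in the stepped-up coloring forces a monochromatic copy of $G$, or of a suitable auxiliary structure, among the $\delta$-values. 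The natural candidate is the $3$-graph whose triples are the triples of some ordering that contains a long monotone $\delta$-sequence. The gain of $\log\log\Delta$ in the exponent has to come from this step. Stepping up a graph bound of $2^{\Theta(\Delta)}$ normally gives only $2^{\Theta(\Delta)}$ at the $3$-uniform level for bounded degree. We therefore need $F$ whose vertex degrees stay at most $\Delta$ while the $\delta$-structure carries a graph of degree about $\Delta\log\log\Delta/\log\Delta$, or a random-like structure of size $\log\Delta$. One plan is to let $F$ be a random $3$-graph on $m=\Delta^{C}$ vertices with edge density chosen to give degree $\Delta$. We then color $K_M^{(3)}$, with $M=2^{c\Delta\log\log\Delta}m$, by a combination of a random coloring and stepping-up, and use a union bound over embeddings. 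The count of embeddings, $M^m$, must be beaten by $2^{-e(F)}$-type probabilities, and for random colorings this gives only about $2^{\Delta}$. The extra $\log\log\Delta$ factor must come from the stepping-up structure, which forces many edges of each copy into a few $\delta$-classes.

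The main obstacle is extracting the $\log\log\Delta$ factor. What I would try first is a two-level coloring. The vertex set is $[M]$ with $M=2^{s}\cdot L$, where $s$ is about $c\Delta\log\log\Delta$. Each vertex has a coordinate in $\{0,1\}^s$ and a label in $[L]$. Triples are colored by stepping up from a random graph coloring on about $\log\Delta$ levels, and $F$ is built as a bounded-degree $3$-graph that is an "expander on $\delta$-levels." The proof then has three steps, in order: (i) construct $F$ and verify maximum degree at most $\Delta$; (ii) show that every monochromatic embedding of $F$ induces, via the $\delta$ function, a monochromatic dense structure in the base coloring; (iii) bound the probability of that event by a union bound, which gives the bound $2^{c\Delta\log\log\Delta}$. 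I expect step (ii), the combinatorial transfer that produces the $\log\log\Delta$ gain, to be the hardest part.
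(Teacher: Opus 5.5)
\textbf{There is a genuine gap, and it appears in your very first reduction.} If $H$ is $t=\lfloor n/m\rfloor$ disjoint copies of a fixed gadget $F$ on $m$ vertices plus isolated vertices, you can greedily extract monochromatic copies of $F$ while at least $r(F)$ vertices remain. This gives $r(H)\le 2n+r(F)$. So $r(H)\le 3n$ as soon as $n\ge r(F)$. The theorem must hold for every $n\ge 2^\Delta$, and your $F$ does not depend on $n$, so no gadget can yield $r(H)\ge 2^{c\Delta\log\log\Delta}n$.

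The false step is the claim that a monochromatic $H$ in the blow-up must place some copy of $F$ on pairwise distinct parts. Nothing forces this. A part has room for $n/m\ge m$ vertices once $n\ge m^2$, and intra-part triples are red by fiat. So whole copies of $F$ can be packed into single parts, and your blow-up coloring itself contains a red $H$; no connectivity or density condition on $F$ prevents this. In blue, each edge only needs to meet three parts. That projects to a homomorphic image of $F$, not to a copy of $F$ in $\chi$.

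The paper avoids this by not decomposing $H$ at all. $H_R$ is the union of $m=\lfloor\Delta/2\rfloor$ independent random near-perfect matchings on all of $[n]$. Lemma~\ref{lem:robust-random} then shows, with one union bound over all $M_3(m)^n$ maps $h:[n]\to[0,M_3(m))$ with fibers of size at most $n/m$, that some edge receives three distinct images forming a blue triple. Repeated images are counted as red, so folding is permitted and still fails. Blue is killed separately by adding one complete $K^{(3)}_{q+2}$. Because it is complete and repeated-image triples are red, its vertices must receive distinct images, so the projection argument you wanted is valid exactly there.

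\textbf{Second, your step (ii), the source of the $\log\log\Delta$ gain, is left open, and stepping up from a graph coloring is not where the gain comes from.} Your observation is correct that a symmetric random coloring only beats the count of embeddings up to $M\approx 2^{\Theta(\Delta)}$. The paper's fix is to make the base coloring asymmetric. It takes a random pair-coloring $\xi$ of a set of size $\exp(c_0m\log\log m)$ with $q=\lfloor m^{1/12}\rfloor$ colors, and colors $i<j<\ell$ red iff $\xi(ij)=\xi(i\ell)$. This has two effects:
\begin{itemize}
\item Every set of size at least $m/2$ has red density at most $(\log m)^{-1/4}/8$ (Lemma~\ref{lem:ordered-palette}), and Lemma~\ref{lem:weighted-palette} gives the weighted version for arbitrary fiber profiles.
\item There is no blue $K^{(3)}_{q+2}$, since the colors $\xi(x_1x_a)$ would all have to be distinct. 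Moreover $\Delta(K^{(3)}_{q+2})=\binom{q+1}{2}=o(\Delta)$, so this blocker is affordable.
\end{itemize}
For a fixed capacity-respecting map, an almost red embedding then forces $\Theta(mn)$ matching blocks into a family of density $O((\log m)^{-1/4})$. This has probability at most $\exp(-\Theta(mn\log\log m))$, which beats the $\exp(c_0mn\log\log m)$ maps. Without this sparse-red, clique-blocked-blue asymmetry, and without a single random hypergraph spanning all of $[n]$, your plan has neither a valid reduction nor the $\log\log\Delta$ factor.
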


Theorem~\ref{thm:base-three} provides the starting point for the higher-uniformity result. Applying the stepping-up argument recursively, each increase in the uniformity raises the tower height by one, leading to the following general theorem.

\begin{theorem}\label{main}
For any $k\ge3$, there exists a constant $c_k>0$ such that for any integers $\Delta\ge1/c_k$ and $n\ge2^\Delta$, there exists a $k$-uniform $n$-vertex hypergraph $H$ with maximum degree at most $\Delta$ such that
\[
        r(H)\ge \tw_{k-1}\!\bigl(c_k\Delta\log\log\Delta\bigr)\,n.
\]
\end{theorem}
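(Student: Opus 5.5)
We argue by induction on $k$, with Theorem~\ref{thm:base-three} as the base case $k=3$. For the step we use an Erd\H{o}s--Hajnal type stepping-up construction that works for bounded-degree hypergraphs rather than cliques. For the induction we strengthen the hypothesis slightly. For each $k$ we keep a $k$-uniform hypergraph $H_k$ on $m_k$ vertices with maximum degree at most $\Delta$, together with a two-colouring $\chi_k$ of $K^{(k)}_{N_k}$ with no monochromatic copy of $H_k$, where $N_k\ge \tw_{k-2}(c_k\Delta\log\log\Delta)$. The component $H_k$ is small: it has at most about $2^{O(\Delta)}$ vertices. The final $H$ on $n\ge 2^\Delta$ vertices is a disjoint union of $\lfloor n/m_k\rfloor$ copies of $H_k$, padded with isolated vertices. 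The linear factor $n$ then comes from the standard argument: if we blow up a colouring avoiding $H_k$ into about $n/m_k$ parts, any monochromatic $H$ must put some copy of $H_k$ monochromatically inside an avoiding configuration, so $r(H)\ge N_k\cdot n/m_k$. Since $m_k\le 2^{O(\Delta)}$ is negligible against the tower, we get $r(H)\ge \tw_{k-1}(c_k\Delta\log\log\Delta)\,n$ after shrinking $c_k$. This is also how Theorem~\ref{thm:base-three} presumably gets its factor $n$, and we would reuse the same reduction.

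\textbf{Stepping-up step.} Take the ground set $\{0,1\}^{N_k}$ of size $2^{N_k}$, ordered lexicographically. For $v<w$ let $\delta(v,w)$ be the first coordinate where they differ. A $(k+1)$-set $v_1<\dots<v_{k+1}$ determines $\delta_i=\delta(v_i,v_{i+1})$. Colour it by $\chi_k(\delta_1,\dots,\delta_k)$ when the $\delta_i$ are monotone, and by the usual local-extremum rule otherwise, exactly as in the stepping-up lemma. Let $H_{k+1}$ be obtained from $H_k$ by adding one new vertex to each edge, or by a slightly richer gadget with bounded degree. The gadget must be chosen so that any monochromatic copy of $H_{k+1}$ in the stepped-up colouring forces a monotone run of $\delta$'s, and that run carries a monochromatic copy of $H_k$ in $\chi_k$. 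This contradicts the choice of $\chi_k$. The degree rises by at most a constant factor, which we absorb by running the induction with $\Delta/C_k$ in place of $\Delta$; this costs only a constant in $c_k$.

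\textbf{Main obstacle.} The hard part is the gadget argument. In the clique setting of Erd\H{o}s--Hajnal, every $(k+1)$-subset is an edge, so a monotone sequence of $\delta$'s is guaranteed among the vertices. A sparse $H_{k+1}$ gives no such guarantee. We must design $H_{k+1}$ so that every embedding order of its vertices produces, on some sub-configuration, a monotone or extremal $\delta$-pattern that reproduces $H_k$. Bounded degree rules out full cliques, so the natural fix is to add bounded-size tight paths or short ``zigzag'' structures. Any two vertices of $H_k$ that share an edge are joined through such a structure, which forces monotonicity via the pigeonhole argument on the $\delta$'s. We expect to use the fact that the $\delta$-values of consecutive vertices are distinct, together with a bounded-length Erd\H{o}s--Szekeres argument. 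This keeps the degree at $O_k(\Delta)$ while guaranteeing that the induced colouring on the $\delta$'s contains a monochromatic $H_k$.
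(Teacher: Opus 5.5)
Your reduction from a small component to $n$ vertices fails, and for the statement as posed (every $n\ge 2^\Delta$) this is fatal. Suppose $H$ is $t$ disjoint copies of $H_k$ plus isolated vertices. Then $r(H)\le 2n+r(H_k)$. Indeed, in any two-colouring of $K^{(k)}_N$ with $N\ge 2tm_k+r(H_k)$, you can greedily extract vertex-disjoint monochromatic copies of $H_k$ as long as $r(H_k)$ vertices remain. This gives at least $2t$ copies, and $t$ of them share a colour. So once $n\ge r(H_k)$ you get $r(H)\le 3n$, far below $\tw_{k-1}(c_k\Delta\log\log\Delta)\,n$; this is the hypergraph form of the Burr--Erd\H{o}s--Spencer behaviour of $r(tG)$. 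The blow-up inequality $r(H)>N\lfloor b\rfloor$ holds only when $H$ has no monochromatic embedding into the blow-up at all, including embeddings that put several vertices into one fibre. A colouring avoiding $H_k$ injectively says nothing about that. This is also not how Theorem~\ref{thm:base-three} gets its factor $n$. There $H_R$ is a union of $m$ random near-perfect matchings on all of $[n]$, and Lemma~\ref{lem:robust-random} shows that \emph{every} map with fibres of size at most $|U|/m$ creates a blue triple. The linear factor comes from this global, expanding structure with fibres of size $\Theta(n/m)$, not from disjoint copies. Separately, $N_k\ge\tw_{k-2}(\cdot)$ is one level too low for the bound you then claim.

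The stepping-up step is also not supplied, and the simplest gadget you suggest provably fails. Put the vertices of $H_k$ on values whose consecutive $\delta$'s strictly decrease, and put all new vertices above them at a $\delta$-value exceeding every internal one (with $\delta$ as in Section~\ref{sec:stepping}). Then every edge of $H_{k+1}$ has a strictly decreasing $\delta$-pattern followed by a larger last value. This pattern is non-monotone, so every edge receives one fixed colour independent of $\chi_k$ (red under the paper's rule). More generally, your plan has no way to control the colours produced by the non-monotone branch. It also uses Theorem~\ref{thm:base-three} as a black box, whereas the argument needs specific properties of the base colouring.

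The paper does not induct on $k$ this way. It builds one $k$-graph $H=H_R\cup H_E\cup H_B$ on $[n]$ and rules out monochromatic embeddings into $\phi_m^{(k)}[b_k]$ with $b_k=\Theta(n/m)$. The blue case is excluded by the small clique $H_B$ on $O_k(q)$ vertices. This is affordable only because the ordered-palette base colouring has no blue $K^{(3)}_{q+2}$, and Lemma~\ref{lem:blue-clique} carries this through the stepping-up with constant loss. The red case is handled by a reverse induction on uniformity. The Brada\v{c}--Hunter--Sudakov expander blocks $H_E$ force the interval-splitting structure: the intervals $I_1,I_2$, the sets $S_j$, and the new map $h^{\ell-1}(v)=\delta(x_v,x^*)$. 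The random part is carried along as $H_R(U^\ell;W_{\ell+1},\dots,W_k)$ until Lemma~\ref{lem:robust-random} gives the contradiction at uniformity $3$. Your ``zigzag plus Erd\H{o}s--Szekeres'' sketch addresses none of this, and it is precisely the hard part.
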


Our proof adopts a stepping-up strategy in the spirit of the framework introduced by Brada\v{c}, Hunter and Sudakov~\cite{BHS}. The principal novelty here is a robust 3-uniform base, which we construct by combining an ordered palette with a bounded-degree random matching; a separate small complete hypergraph is used to eliminate the blue case. This base case is then propagated through the stepping-up recursion, yielding the additional $\log\log\Delta$ factor.

The paper is organized as follows. Section~\ref{sec:stepping} introduces the stepping-up coloring and the relevant embedding notions. Section~\ref{sec:technical} establishes the ordered-palette estimates, the blue obstruction, and the robust random construction. Section~\ref{sec:proofs} first proves the $3$-uniform base theorem and then proves Theorem~\ref{main} by combining these ingredients with the reverse-induction argument.

\section{Stepping up and colorings}\label{sec:stepping}

We use $[n]$ to denote $\{1,2,\ldots,n\}$. For integers $a<b$, write $[a,b]=\{a,a+1,\ldots,b\}$ and $[a,b)=\{a,a+1,\ldots,b-1\}$. Throughout the paper, we omit floor and ceiling signs whenever they have no effect on the argument. Define the tower function recursively by $\tw_1(x)=x$ and $\tw_{i+1}(x)=2^{\tw_i(x)}$.

Fix $k\ge3$ throughout Sections~\ref{sec:stepping} and~\ref{sec:technical}, and put $\eta=1/(4k)$. Let $c_0=c_0(k)>0$ be a sufficiently small constant, chosen after Lemma~\ref{lem:robust-random}. For a sufficiently large positive integer $m$, set
\[
        q=q(m)=\lfloor m^\eta\rfloor,\qquad
        \rho=\rho(m)=(\log m)^{-1/4},\qquad
        M_3(m)=\lfloor\exp(c_0m\log\log m)\rfloor.
\]
For $\ell\ge4$, define recursively
\begin{equation}\label{eq:M-recursion}
        M_\ell(m)=2^{M_{\ell-1}(m)-1}.
\end{equation}

We shall later fix a pair-coloring $\xi_m:\binom{[0,M_3(m))}{2}\to[q]$ provided by Lemma~\ref{lem:ordered-palette}. Throughout this section, whenever $x_1\le\cdots\le x_\ell$, the notation $\{x_1,\ldots,x_\ell\}$ denotes the $\ell$-element multiset represented by this nondecreasing list; in particular, repetitions are retained. The colorings $\phi_m^{(\ell)}$ below are defined on such multisets. For $0\le x_1\le x_2\le x_3<M_3(m)$, define
\[
\phi_m^{(3)}(\{x_1,x_2,x_3\})=
\begin{cases}
\red,&\text{if }x_1,x_2,x_3\text{ are not all distinct},\\
\red,&\text{if }x_1<x_2<x_3\text{ and }\xi_m(x_1x_2)=\xi_m(x_1x_3),\\
\blue,&\text{otherwise}.
\end{cases}
\]

We now recall the stepping-up notation. For a nonnegative integer $x$, write
\[
        x=\sum_{j=0}^\infty a_j2^j,
        \qquad a_j\in\{0,1\},
\]
for its binary representation. For $i\ge1$, set
$\operatorname{bit}(x,i):=a_{i-1}$. For distinct $x,y\ge0$, define
\[
        \delta(x,y)
        :=\max\{i\in\mathbb Z_{>0}:
        \operatorname{bit}(x,i)\ne\operatorname{bit}(y,i)\},
\]
and, by convention, put $\delta(x,x):=0$. Thus $\delta(x,y)>0$ whenever
$x\ne y$. For $x_1\le\cdots\le x_t$, write
$\delta(\{x_1,\ldots,x_t\})=(\delta_1,\ldots,\delta_{t-1})$,
where $\delta_i=\delta(x_i,x_{i+1})$. We use the following standard properties.

\medskip
\noindent\textbf{Property A.} For distinct $x,y$, we have $x<y$ if and only if $\operatorname{bit}(x,\delta(x,y))<\operatorname{bit}(y,\delta(x,y))$.

\medskip
\noindent\textbf{Property B.} If $x\le y\le z$ and $x<z$, then $\delta(x,y)\ne\delta(y,z)$.

\medskip
\noindent\textbf{Property C.} For $x_1\le\cdots\le x_t$, we have
\[
        \delta(x_1,x_t)=\max_{1\le i\le t-1}\delta_i.
\]
Moreover, if $x_1<x_t$, then this maximum is attained at a unique index. Indeed, deleting consecutive repetitions gives a strictly increasing sequence to which the usual stepping-up property applies; reinserting the repetitions only adds zero $\delta$-values, whereas the maximum is positive.

For $\ell\ge4$, let $0\le x_1\le\cdots\le x_\ell<M_\ell(m)$ and write $\delta(\{x_1,\ldots,x_\ell\})=(\delta_1,\ldots,\delta_{\ell-1})$. A sequence is called monotone if it is nondecreasing or nonincreasing. If this $\delta$-sequence is not monotone, then $x_1<x_\ell$, so Property~C implies that its maximum is unique. Whenever the maximum is unique, we write $\argmax_i\delta_i$ for the unique index at which it is attained. Thus the following definition is unambiguous:
\[
\phi_m^{(\ell)}(\{x_1,\ldots,x_\ell\})=
\begin{cases}
\phi_m^{(\ell-1)}(\{\delta_1,\ldots,\delta_{\ell-1}\}),
    &\text{if }(\delta_1,\ldots,\delta_{\ell-1})\text{ is monotone},\\
\red,&\text{if it is not monotone and }\argmax_i\delta_i\in\{1,\ell-1\},\\
\blue,&\text{if }\argmax_i\delta_i\in[2,\ell-2].
\end{cases}
\]

Given a positive real $b$, an $\ell$-graph $G$, and a map $h:V(G)\to[0,M_\ell(m))$, we say that $h$ is an \emph{embedding} of $G$ into $\phi_m^{(\ell)}[b]$ if $|h^{-1}(y)|\le b$ for every $y\in[0,M_\ell(m))$. It is an \emph{almost monochromatic embedding} in color $c\in\{\red,\blue\}$ if, for every edge $e=\{v_1,\ldots,v_\ell\}\in E(G)$, either the images $h(v_1),\ldots,h(v_\ell)$ are not all distinct or $\phi_m^{(\ell)}(\{h(v_1),\ldots,h(v_\ell)\})=c$. It is a \emph{monochromatic embedding} in color $c$ if this equality holds for every edge, including those whose images are not all distinct.

If there is no monochromatic embedding of an $\ell$-graph $G$ into $\phi_m^{(\ell)}[b]$, then
\[
        r(G)>M_\ell(m)\lfloor b\rfloor.
\]
Indeed, color the complete $\ell$-graph on $[0,M_\ell(m))\times[\lfloor b\rfloor]$ according to the first coordinates. Every vertex of $G$ mapped into a fixed first-coordinate fiber uses at most $\lfloor b\rfloor\le b$ vertices, so a monochromatic copy in this blow-up would induce a monochromatic embedding into $\phi_m^{(\ell)}[b]$.

\section{Technical lemmas}\label{sec:technical}

We first recall two results of Brada\v{c}, Hunter and Sudakov~\cite{BHS}. The first provides bounded-degree graphs with robust neighborhood expansion into every large set, while the second supplies a sparse family of almost disjoint blocks whose intersections with large vertex sets are pseudorandomly distributed; together they underlie the expander component used in the reverse induction.

\begin{lemma}[\cite{BHS}]\label{lem:BHS-graph}
For any integer $k\ge1$ and any $\varepsilon>0$, there are constants $d=d(k,\varepsilon)$ and $M_0=M_0(k,\varepsilon)$ such that for all $M\ge M_0$, there is a graph $F$ on $M$ vertices satisfying the following:
\begin{enumerate}
\item[(i)] $\Delta(F)\le d$.
\item[(ii)] For every $U\subseteq V(F)$ with $|U|\ge\varepsilon M$, the number of vertices having fewer than $k$ neighbors in $U$ is at most $\varepsilon M$.
\end{enumerate}
\end{lemma}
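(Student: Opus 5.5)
We prove Lemma~\ref{lem:BHS-graph} with the probabilistic method. Take $F$ to be a random $d$-regular graph on $M$ vertices; equivalently, take the union of $d$ independent uniformly random perfect matchings, possibly with a vertex removed when $M$ is odd, which only strengthens (i). Condition (i) holds by construction. For (ii), it suffices to show that with positive probability there are no sets $U,W\subseteq V(F)$ with $|U|\ge\varepsilon M$ and $|W|=\varepsilon M$ such that every vertex of $W$ has fewer than $k$ neighbours in $U$. Passing to subsets, we may assume $|U|=\varepsilon M$ exactly. Shrinking $U$ only decreases neighbourhood counts, so a bad configuration with a larger $U$ yields one with $|U|=\varepsilon M$.

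Fix such $U$ and $W$ and analyse one random perfect matching $P$. For each $w\in W$, the partner of $w$ lies in $U$ with probability roughly $\varepsilon$. We restrict attention to a subset $W'\subseteq W\setminus U$ of size at least $\varepsilon M/2$; this is possible unless $W$ is mostly inside $U$, and then we simply intersect with $U$ and argue symmetrically. We expose the partners of the vertices of $W'$ one at a time. Conditioned on the history, each new vertex is matched into $U$ with probability at least $\varepsilon/2$, as long as fewer than $\varepsilon M/4$ vertices of $U$ have been used. Over the $d$ independent matchings, vertex $w$ therefore receives a number of $U$-neighbours that stochastically dominates a $\mathrm{Bin}(d,\varepsilon/2)$ variable. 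Taking $d\ge C k/\varepsilon$ makes the probability that this count is below $k$ at most $e^{-c\varepsilon d}$.

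The main obstacle is the dependence between vertices. The events for different $w\in W'$ are not independent, yet we need a joint bound of the form $\exp(-c\varepsilon d\,|W'|)$. We handle this with the sequential exposure above: each step gives a conditional success probability of at least $\varepsilon/2$. A martingale or Chernoff-type bound for this dominated process then gives the following. The probability that at least half of $W'$ ends up with fewer than $k$ neighbours in $U$ is at most $\exp(-c'\varepsilon^2 d M)$ for an absolute $c'>0$, once $d$ is a large multiple of $k/\varepsilon$. Summing over the vertices via the bounded-differences inequality applied to the configuration model of the $d$ matchings gives the same conclusion.

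Finally, we take a union bound over all pairs $(U,W)$. There are at most $\binom{M}{\varepsilon M}^2\le (e/\varepsilon)^{2\varepsilon M}$ such pairs. The failure probability for each is $\exp(-c'\varepsilon^2 dM)$, so it suffices that $c'\varepsilon^2 d>2\varepsilon\log(e/\varepsilon)$. We therefore set
\[
d=d(k,\varepsilon)=\Bigl\lceil C\bigl(k+\log(1/\varepsilon)\bigr)/\varepsilon^{2}\Bigr\rceil .
\]
The smallness assumptions made above (integrality and parity, $\varepsilon M/4\ge1$) hold for $M\ge M_0(k,\varepsilon)$. Hence some $F$ satisfies both (i) and (ii).
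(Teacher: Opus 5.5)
The paper does not prove this lemma. It is quoted from Brada\v{c}, Hunter and Sudakov and used as a black box, so there is no internal argument to compare against. Your route is a correct self-contained proof: a union of $d$ independent random perfect matchings, sequential exposure giving conditional success probability of order $\varepsilon$, domination by a binomial, and a union bound over pairs of linear-sized sets. It is the same toolkit the paper uses in Lemma~\ref{lem:robust-random}. The per-pair failure probability $\exp(-\Omega(\varepsilon^2 dM))$ beats the $\exp(O(\varepsilon\log(1/\varepsilon)M))$ choices of sets once $d$ is a large constant, and that is all the lemma asks for. Another standard route is the expander mixing lemma applied to a $d$-regular graph with second eigenvalue $\lambda=O(\sqrt d)$. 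After shrinking $U$ to size $\varepsilon M$, it gives (ii) deterministically whenever $d\varepsilon-\lambda\ge k$, but it needs such spectral expanders on every number of vertices. Your argument avoids that input.

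Several steps are stated too loosely and need tightening, though none needs a new idea.

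\emph{Already-matched vertices.} When you expose the partners of $W'$ one at a time, a vertex of $W'$ may already be matched to an earlier vertex of $W'$. Its conditional probability of landing in $U$ is then $0$, not at least $\varepsilon/2$. Count only steps at still-unmatched vertices. Each such step uses up at most two vertices of $W'$, so every matching contributes at least $|W'|/2$ of them.

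\emph{Multi-edges.} The union of matchings is a multigraph, and it is not the same model as a uniform random $d$-regular graph. Condition (ii), however, counts distinct neighbours. Declare a step successful only if the partner lies in $U$ and is not already a neighbour of $w$. This excludes at most $d$ candidates, which is negligible for $M\ge M_0$.

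\emph{Case split and capacity.} The case split ``unless $W$ is mostly inside $U$'' and the capacity caveat both disappear with one reduction. From any bad pair, pass to disjoint $W'\subseteq W$ and $U'\subseteq U$ with $|W'|=\varepsilon M/4$ and $|U'|=\varepsilon M/2$; this is always possible since $|U\setminus W'|\ge 3\varepsilon M/4$. Only partners of $W'$ are exposed, so at least $|U'|-|W'|=\varepsilon M/4$ vertices of $U'$ stay unmatched throughout. Hence each genuine step succeeds with conditional probability at least $\varepsilon/8$.

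\emph{The final estimate.} ``At least half of $W'$'' is more than you need. If every vertex of $W'$ has fewer than $k$ neighbours in $U'$, the total number of successes is below $k|W'|$. The lower tail of the dominating $\mathrm{Bin}(d|W'|/2,\varepsilon/8)$ then gives $\exp(-\Omega(\varepsilon^2 dM))$ once $d\ge Ck/\varepsilon$.

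The plain bounded-differences alternative you mention does not give the same exponent. Its variance proxy is of order $dM$ rather than the mean $\Theta(\varepsilon^2 dM)$, so it still works, but only for larger $d$. With these repairs, $d=C(k+\log(1/\varepsilon))/\varepsilon$ already suffices, so your choice of $d$ is more than enough.
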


\begin{lemma}[\cite{BHS}]\label{lem:BHS-hypergraph}
For every $\varepsilon>0$, there is a constant $C=C(\varepsilon)$ such that for all integers $n,s$ satisfying $n\ge Cs^2$ and $s\ge C$, there is a nonempty $s$-uniform $n$-vertex hypergraph $T$ satisfying the following:
\begin{enumerate}
\item[(i)] $\Delta(T)\le C$.
\item[(ii)] $|e\cap e'|<\varepsilon s$ for all distinct $e,e'\in E(T)$.
\item[(iii)] For every $A\subseteq V(T)$ with $|A|\ge\varepsilon n$, there are at most $\varepsilon e(T)$ hyperedges $e\in E(T)$ such that $|e\cap A|-(|A|/n)s>\varepsilon s$.
\end{enumerate}
\end{lemma}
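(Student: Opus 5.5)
The plan is to prove Lemma~\ref{lem:BHS-hypergraph} by a random construction followed by a small alteration. Fix $\varepsilon>0$, put $D=\lceil K/\varepsilon^{3}\rceil$ for a large absolute constant $K$, and choose $C=C(\varepsilon)\ge D$ large. Assume first that $s\mid n$; the general case is handled by padding. Let $T_0$ be the union of $D$ independent uniformly random partitions $P_1,\dots,P_D$ of $[n]$ into $n/s$ blocks of size $s$, with the blocks as hyperedges. Then $\Delta(T_0)=D\le C$ and $e(T_0)=Dn/s$. We obtain $T$ by deleting every hyperedge that meets some other hyperedge in at least $\varepsilon s$ vertices. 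Deleting edges never increases degrees, so (i) is automatic, and (ii) holds by construction. It remains to show that few edges are deleted, which also gives nonemptiness, and to show that (iii) survives the deletion.

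\textbf{Step 1 (few deletions).} Blocks in the same partition are disjoint. For blocks $e,e'$ from different partitions, $|e\cap e'|$ is hypergeometric with mean $s^2/n\le 1/C$. Hence
\[
\Pr\bigl(|e\cap e'|\ge\varepsilon s\bigr)\le\binom{s}{\varepsilon s}\Bigl(\frac{s}{n}\Bigr)^{\varepsilon s}\le\Bigl(\frac{e s}{\varepsilon n}\Bigr)^{\varepsilon s}.
\]
A fixed edge has at most $Dn/s$ potential partners. Its probability of being deleted is therefore at most $D(n/s)(es/(\varepsilon n))^{\varepsilon s}$. Since $n\ge Cs^2$ and $s\ge C$, this is $D(e/\varepsilon)^{\varepsilon s}(s/n)^{\varepsilon s-1}\le \varepsilon^2/10$. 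Here it matters that the exponent $\varepsilon s-1$ is large, so the bound holds no matter how large $n$ is compared with $s$. By Markov's inequality, with probability at least $1/2$ at most an $\varepsilon^2/5$ fraction of the edges of $T_0$ is deleted.

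\textbf{Step 2 (pseudorandomness of $T_0$; the main step).} Call an edge \emph{bad for $A$} if $|e\cap A|-(|A|/n)s>\varepsilon s/2$. We want that, with probability $1-o(1)$, simultaneously for all $A$ with $|A|\ge\varepsilon n$, at most $(\varepsilon/2)e(T_0)$ edges of $T_0$ are bad for $A$. Fix $A$ and one partition $P_j$. Each block is hypergeometric, so by Hoeffding's inequality it is bad with probability at most $e^{-\varepsilon^2 s/2}$. The block intersections of a uniformly random equipartition are negatively associated, so the probability that a prescribed set of $t$ blocks are all bad is at most $e^{-t\varepsilon^2 s/2}$. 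The partitions are independent, so the same product bound holds for $t$ prescribed edges of $T_0$. A union bound over sets of $t=(\varepsilon/2)Dn/s$ edges gives
\[
\Pr\bigl(\ge t \text{ bad edges}\bigr)\le\binom{Dn/s}{t}e^{-t\varepsilon^2s/2}\le\exp\Bigl(-\tfrac{\varepsilon}{2}Dn\bigl(\tfrac{\varepsilon^2}{2}-\tfrac{O(\log(1/\varepsilon))}{s}\bigr)\Bigr)\le \exp\bigl(-\varepsilon^3Dn/8\bigr),
\]
using $s\ge C$. Since $D\ge K/\varepsilon^3$, this is at most $4^{-n}$, which survives the union bound over all $2^n$ sets $A$. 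The key point is to use many independent partitions. A single random partition only gives a failure probability of order $e^{-\Theta(\varepsilon^3 n)}$, which cannot beat the $2^n$ choices of $A$, and the choice $D\ge K\varepsilon^{-3}$ is exactly what repairs this.

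\textbf{Step 3 (combining).} Fix an outcome satisfying both Step~1 and Step~2. Then $e(T)\ge(1-\varepsilon^2/5)e(T_0)>0$. Every edge of $T$ that violates (iii) for some $A$ is bad for that $A$ in $T_0$, even with the weaker threshold $\varepsilon s/2<\varepsilon s$. So the number of violating edges is at most $(\varepsilon/2)e(T_0)\le\varepsilon e(T)$, which is (iii). If $s\nmid n$, run the construction on the largest multiple $n'$ of $s$ below $n$ and add the remaining fewer than $s$ vertices as isolated vertices. Since $n\ge Cs^2$, we have $n'\ge(1-1/(Cs))n$, and the resulting change in $|A|/n$ shifts $(|A|/n)s$ by $O(1/C)$, which is absorbed into the slack between $\varepsilon s/2$ and $\varepsilon s$. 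I expect Step~2 to be the main obstacle, specifically the negative-association bound for the block intersections in one random equipartition. If that proves delicate, I would instead generate each partition by revealing the blocks sequentially and apply a martingale (Azuma/Freedman) bound to the number of bad blocks, which gives an estimate of the same $e^{-\Omega(\varepsilon^3 n)}$ form per partition.
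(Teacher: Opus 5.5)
The paper gives no proof of this lemma. It is quoted from Brada\v{c}, Hunter and Sudakov~\cite{BHS} and used as a black box, so there is no in-paper argument to compare yours with. Taken on its own, your proof is correct.

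\begin{itemize}
\item The alteration step gives (i) and (ii) for free. It also deletes any block that appears in two partitions, so $T$ is a simple hypergraph; it is worth saying this explicitly.
\item The first-moment bound in Step~1 is right. The bound $D(e/\varepsilon)^{\varepsilon s}(s/n)^{\varepsilon s-1}\le D(e/\varepsilon)\bigl(e/(\varepsilon Cs)\bigr)^{\varepsilon s-1}$ is exactly where both hypotheses $n\ge Cs^2$ and $s\ge C$ enter.
\item In Step~2 you correctly see that one random equipartition gives only $e^{-\Theta(\varepsilon^3 n)}$. Taking $D=\Theta(\varepsilon^{-3})$ independent ones gives $e^{-\varepsilon^3Dn/8}\le 4^{-n}$. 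This survives the union bound over all $A$ while keeping $\Delta(T)\le D\le C$.
\end{itemize}

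The step you flag as the main obstacle is standard. Generate each partition from a uniform permutation $\sigma$ with labelled blocks. The vector $(\mathbf{1}[\sigma(p)\in A])_{p\in[n]}$ is then a permutation distribution, hence negatively associated by Joag-Dev and Proschan. The block counts are nondecreasing functions of disjoint coordinates, so they are negatively associated too. This gives your product bound for the joint upper tails of $t$ prescribed blocks.

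Do not rely on the martingale fallback. Exposing one block can change the number of bad blocks by up to $s+1$, so Azuma gives only $\exp(-\Omega(\varepsilon^2 n/s^3))$. That loses the factor $\varepsilon^2 s$ in the exponent which the product bound extracts from the per-block probability $e^{-\varepsilon^2 s/2}$.

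One small tidy-up: nothing in Step~2 uses $|A|\ge\varepsilon n$, so state it for all $A$. The padding step actually needs this, because $A\cap[n']$ may fall slightly below $\varepsilon n'$.
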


To use these two structural ingredients in the hypergraph setting, we now encode graphs as uniform hypergraphs and introduce the lower-uniformity projections that appear throughout the reverse induction.

\begin{definition}\label{def:HkF}
Let $F$ be a graph and let $\ell\ge3$. The $\ell$-graph $H^{(\ell)}(F)$ has vertex set $V(F)$, and an $\ell$-set $e$ is an edge if there exists $v\in e$ such that $F[e\setminus\{v\}]$ is connected.
\end{definition}

\begin{definition}\label{def:Hkfamily}
Given a collection of graphs $\mathcal F$, all with vertex sets contained in $[n]$, define the $\ell$-graph $H^{(\ell)}(\mathcal F)$ on $[n]$ by $E(H^{(\ell)}(\mathcal F))=\bigcup_{F\in\mathcal F}E(H^{(\ell)}(F))$.
\end{definition}

\begin{definition}\label{def:lower-uniformity}
Let $H$ be a $k$-graph and let $U,W_1,\ldots,W_r$ be pairwise disjoint subsets of $V(H)$, where $0\le r\le k-3$. The $(k-r)$-graph $H(U;W_1,\ldots,W_r)$ has vertex set $U$, and $S\in\binom U{k-r}$ is an edge if there are vertices $(w_1,\ldots,w_r)\in W_1\times\cdots\times W_r$ such that $S\cup\{w_1,\ldots,w_r\}\in E(H)$. When $r=0$, this means $H(U)=H[U]$.
\end{definition}

In particular, for pairwise disjoint $U,W_4,\ldots,W_k$, the $3$-graph $H(U;W_4,\ldots,W_k)$ has vertex set $U$, and $xyz$ is an edge precisely when there are $w_i\in W_i$ for $i=4,\ldots,k$ such that $\{x,y,z,w_4,\ldots,w_k\}\in E(H)$.

The remaining lemmas develop the two new ingredients of the proof: a sparse ordered base palette and a robust random matching construction. We begin with the palette.

\begin{lemma}\label{lem:ordered-palette}
Let $\eta>0$ and $c_0>0$ be fixed. For sufficiently large $m$, put $q=\lfloor m^\eta\rfloor$, $\rho=(\log m)^{-1/4}$, $\beta=\rho/8$, and $S=\lfloor\exp(c_0m\log\log m)\rfloor$. Then there is a coloring $\xi:\binom{[S]}2\to[q]$ such that the $3$-graph $R_\xi$ on $[S]$, defined by $ij\ell\in E(R_\xi)$ if and only if $\xi(ij)=\xi(i\ell)$ for $i<j<\ell$, satisfies
\[
        e(R_\xi[U])\le\beta\binom{|U|}{3}
\]
for every $U\subseteq[S]$ with $|U|\ge m/2$.
\end{lemma}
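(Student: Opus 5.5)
We prove Lemma~\ref{lem:ordered-palette} by the probabilistic method, with a union bound over a suitable family of sets. Choose $\xi$ uniformly at random: each pair $ij$ independently gets a uniform color in $[q]$. For a fixed triple $i<j<\ell$ we have $\Pr[ij\ell\in E(R_\xi)]=1/q=o(\beta)$, because $q=m^{\eta}$ while $\beta=(\log m)^{-1/4}/8$. So the expected density is far below $\beta$, and what we need is concentration strong enough to survive a union bound over sets $U$.

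\textbf{Reduction to sets of size exactly $u=\lceil m/2\rceil$.} Averaging shows that if every $u$-subset $U'$ satisfies $e(R_\xi[U'])\le\beta\binom u3$, then every larger $U$ satisfies $e(R_\xi[U])\le\beta\binom{|U|}3$. Indeed, each triple of $U$ lies in the same fraction $\binom{u}{3}/\binom{|U|}{3}$ of the $u$-subsets of $U$. The number of $u$-subsets of $[S]$ is at most
\[
S^u\le\exp\bigl(c_0 m^2\log\log m\bigr).
\]
So for a fixed $U$ of size $u$ we need a failure probability of at most $\exp(-C m^2\log\log m)$ with $C\gg c_0$. This is what forces the choice of $c_0$ small.

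\textbf{Concentration for a fixed $U$.} Write $U=\{u_1<\dots<u_u\}$. For each apex $u_a$, the colors of the pairs $u_au_b$ with $b>a$ are independent. The number of edges of $R_\xi[U]$ with apex $u_a$ is
\[
\sum_{c\in[q]}\binom{n_{a,c}}{2},
\]
where $n_{a,c}$ counts the later vertices $u_b$ with $\xi(u_au_b)=c$. If this sum exceeds $\beta\binom{u-a}{2}/2$, the color classes must be very unbalanced. Since $\sum_c n_{a,c}=u-a$, a quantity of order $\beta(u-a)^2$ can only arise if roughly $\beta(u-a)$ of the pairs fall into colors already heavily used. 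Concretely, the apex-$a$ star must contain a color class of size $\gtrsim\beta(u-a)$, or a structured large deviation of similar cost.

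Using independence of the colors, the probability of such a deviation for a star of size $t=u-a$ is about
\[
\binom{t}{\beta t}q^{-\beta t}\le\exp\bigl(-\tfrac12\beta t\log q\bigr)=\exp\bigl(-\Omega(\eta\beta t\log m)\bigr).
\]
Different apices use disjoint edge sets, so these events are independent. For $e(R_\xi[U])>\beta\binom u3$, at least a constant fraction of the apices with $t\ge u/4$ must be bad, for instance $\gtrsim\beta u$ of them (up to constants). This gives a failure probability of at most
\[
2^u\exp\bigl(-\Omega(\beta^2u^2\eta\log m)\bigr)=\exp\bigl(-\Omega(\eta m^2(\log m)^{1/2})\bigr).
\]
This bound exceeds $\exp(C m^2\log\log m)$ in the exponent, because $(\log m)^{1/2}\gg\log\log m$. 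The union bound then succeeds for any fixed $c_0$, and in fact with room to spare.

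\textbf{Main obstacle.} The hard step is the per-apex large-deviation estimate. I need a clean inequality of the form
\[
\Pr\Bigl[\sum_c\binom{n_{a,c}}2\ge\beta\tbinom t2/2\Bigr]\le\exp\bigl(-\Omega(\beta t\log q)\bigr).
\]
I would prove it as follows. If $\sum_c n_{a,c}^2\ge\beta t^2/2$, then the vertices lying in colors of size at least $\beta t/4$ carry a total of at least $\beta t/4$ vertices. This uses that $\sum_c n_{a,c}^2$ restricted to small colors is at most $(\beta t/4)\cdot t$. Now union-bound over the set $X$ of these at least $\beta t/4$ vertices and over the partition of $X$ into at most $4/\beta$ colors. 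This costs at most
\[
2^t\cdot q^{4/\beta}\cdot(4/\beta)^{|X|}.
\]
The probability that $X$ is colored with only those colors is at most
\[
\Bigl(\frac{4/\beta}{q}\Bigr)^{|X|}.
\]
Since $q=m^\eta$ dominates $\operatorname{poly}(1/\beta)$, this yields $\exp(-\Omega(\beta t\log m))$ once $\beta t\log m\gg t$. That last condition fails, because $\beta\log m=(\log m)^{3/4}/8$ and the factor $2^t$ must be absorbed. I would fix this by choosing $X$ within a color class directly: fix the large color $c$ and a set $X$ with $|X|=\beta t/4$, which gives the probability bound $\binom{t}{\beta t/4}q^{-\beta t/4}$. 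Here
\[
\log\binom{t}{\beta t/4}\approx\tfrac{\beta t}{4}\log(4e/\beta)\ll\tfrac{\beta t}{4}\log q,
\]
so the estimate holds.
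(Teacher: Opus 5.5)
Your overall route is the paper's: a uniformly random $q$-coloring, the decomposition $e(R_\xi[U])=\sum_a Z_a$ with $Z_a=\sum_c\binom{n_{a,c}}{2}$, a large monochromatic star as certificate for a heavy row, independence across rows, and the comparison $\beta^2\log q\asymp\eta\sqrt{\log m}\gg\log\log m$ against the union bound. So, as you eventually note, no smallness of $c_0$ is needed. Your averaging reduction to $|U|=\lceil m/2\rceil$ is valid and slightly simplifies the union bound; the paper instead sums over all $t\ge m/2$. The per-apex estimate you call the main obstacle is immediate: $Z_a\le\frac12 M_a t_a$ with $M_a=\max_c n_{a,c}$ and $t_a=u-a$, so a heavy row has a single color class of size $\gtrsim\beta t_a$.

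The step that fails is the counting claim that $e(R_\xi[U])>\beta\binom u3$ forces $\gtrsim\beta u$ bad apices with $t_a\ge u/4$. A constant fraction is certainly not forced; only a $\Theta(\beta)$ fraction is. With your row-relative threshold $Z_a\ge\frac\beta2\binom{t_a}{2}$, consider the following coloring:
\begin{itemize}
\item give all pairs inside the last quarter of $U$ a single color;
\item color every other star evenly.
\end{itemize}
Then all roughly $\binom{u/4}{3}\approx u^3/384$ triples in the last quarter are edges, which exceeds $\beta\binom u3$ since $\beta\to0$. Yet no apex with $t_a\ge u/4$ is bad. So the bad rows may all be short, and your per-row bound $\exp(-\Omega(\beta t_a\log q))$ is then too weak to give $\exp(-\Omega(\beta^2u^2\log q))$ in the way you used it.

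The paper's fix is an absolute threshold: call row $a$ exceptional if $M_a\ge\lfloor\beta u/100\rfloor$.
\begin{itemize}
\item Non-exceptional rows contribute at most $\frac{\beta u}{200}\sum_a t_a<\beta u^3/400$.
\item Each row contributes less than $u^2/2$, so more than $2\beta u/13$ rows are exceptional.
\item Each exceptional row has a certificate (a color plus $\lfloor\beta u/100\rfloor$ later vertices in that color) of probability at most $q\binom{u}{\beta u/100}q^{-\beta u/100}$, irrespective of $t_a$.
\item Independence over $\lfloor\beta u/8\rfloor$ such rows gives $\exp(-\Omega(\beta^2u^2\log q))$.
\end{itemize}

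Alternatively, you can keep your threshold and count by weight instead. The bad set $A$ must satisfy $\sum_{a\in A}t_a\gtrsim\beta u^2$, because $\sum_{a\in A}Z_a\le\frac u2\sum_{a\in A}t_a$. Moreover $\mathbb P(\text{all }a\in A\text{ bad})\le\exp\bigl(-\Omega(\beta\log q\sum_{a\in A}t_a)\bigr)$, once you discard rows with $\beta t_a=O(1)$, which carry only $O(\beta^{-3})\ll\beta u^3$ edges.
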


When inserting this palette into the stepping-up construction, we identify $[S]$ with $[0,S)$ by the natural order-preserving relabeling.

\begin{proof}
Choose all colors $\xi(ij)$ independently and uniformly from $[q]$. Fix $T=\{v_1<\cdots<v_t\}\subseteq[S]$ with $t\ge m/2$. For $1\le a\le t-2$ and $c\in[q]$, put
\[
        N_{a,c}=|\{b:a<b\le t,\ \xi(v_av_b)=c\}|,\qquad
        M_a=\max_{c\in[q]}N_{a,c},\qquad
        Z_a=\sum_{c=1}^q\binom{N_{a,c}}2.
\]
Then
\[
        e(R_\xi[T])=\sum_{a=1}^{t-2}Z_a.
\]
Set
$u=\left\lfloor\frac{\beta t}{100}\right\rfloor$
and call row $a$ exceptional if $M_a\ge u$. If row $a$ is not exceptional, then $Z_a \le\frac12M_a\sum_{c=1}^qN_{a,c}<\frac u2(t-a)$. Hence all nonexceptional rows contribute less than
\[
        \frac u2\sum_{a=1}^{t-2}(t-a)
        <\frac{ut^2}{4}
        \le\frac{\beta t^3}{400}.
\]
If $e(R_\xi[T])\ge\beta\binom t3$, then, since $\binom t3\ge t^3/12$ for $t\ge6$, the exceptional rows contribute more than $\beta t^3/13$. Since every row contributes less than $t^2/2$, there are at least $2\beta t/13$ exceptional rows.

Put $r=\left\lfloor\beta t/8\right\rfloor$. For sufficiently large $m$,
\[
        r\ge\frac{\beta t}{16},\qquad
        u\ge\frac{\beta t}{200}.
\]
Thus the event $e(R_\xi[T])\ge\beta\binom t3$ implies that at least $r$ rows are exceptional.

For every exceptional row $a$, there are a color $c_a\in[q]$ and a set $S_a\subseteq\{a+1,\ldots,t\}$ with $|S_a|=u$ such that
\[
        \xi(v_av_b)=c_a\qquad\text{for all }b\in S_a.
\]
Choose $r$ exceptional rows and one such certificate for each row. Distinct rows use disjoint pair-color variables. Therefore
\[
\mathbb P\left(e(R_\xi[T])\ge\beta\binom t3\right)
\le
\binom tr\left(q\binom tu\right)^rq^{-ru}.
\]
Using $\binom tr\le\left(\frac{et}{r}\right)^r$ and $\binom tu\le\left(\frac{et}{u}\right)^u$, we obtain
\[
\begin{aligned}
\log\mathbb P\left(e(R_\xi[T])\ge\beta\binom t3\right)
&\le
r\log\frac{et}{r}
+ru\log\frac{et}{u}
-r(u-1)\log q.
\end{aligned}
\]
Since $r\ge\beta t/16$, $u\ge\beta t/200$, and $u\ge2$ for sufficiently large $m$,
\[
\begin{aligned}
r\log\frac{et}{r}
&\le \frac{ru}{2}\log\frac{16e}{\beta},\\
ru\log\frac{et}{u}
&\le ru\log\frac{200e}{\beta}.
\end{aligned}
\]
Moreover, $\log q=\eta\log m+O(1)$, whereas
$\log(1/\beta)=\frac14\log\log m+O(1)$. Hence, for sufficiently large $m$,
\[
        \frac12\log\frac{16e}{\beta}
        +\log\frac{200e}{\beta}
        \le\frac14\log q.
\]
Since also $u-1\ge u/2$, it follows that
\[
\mathbb P\left(e(R_\xi[T])\ge\beta\binom t3\right)
\le
\exp\left(-\frac{ru}{4}\log q\right)
\le
\exp\left(-c_1t^2\sqrt{\log m}\right)
\]
for some constant $c_1=c_1(\eta)>0$.

Finally, using
$\binom St\le\left(eS/t\right)^t$
and $\log S\le c_0m\log\log m$, we obtain
\[
\begin{aligned}
&\mathbb P\left(\exists T\subseteq[S],\ |T|\ge m/2:\
        e(R_\xi[T])\ge\beta\binom{|T|}{3}\right)\\
&\qquad\le
\sum_{t=\lceil m/2\rceil}^S
\exp\left(
        t\bigl(c_0m\log\log m+1\bigr)
        -c_1t^2\sqrt{\log m}
        \right)
=o(1).
\end{aligned}
\]
Indeed, for $t\ge m/2$, the negative quadratic term dominates both the
linear term in the exponent and the additional factor coming from the at
most $S$ possible values of $t$, since
$\sqrt{\log m}\gg\log\log m$.
\end{proof}

The preceding lemma gives an unweighted density estimate on every sufficiently large vertex set. For the later capacity argument, we need a weighted form that remains valid for arbitrary fiber profiles.

\begin{lemma}\label{lem:weighted-palette}
Let $m$ be sufficiently large, let $0<\rho\le1$, and let $R$ be a $3$-graph on $[S]$ such that $e(R[U])\le(\rho/8)\binom{|U|}{3}$ for every $U\subseteq[S]$ with $|U|\ge m/2$. If $e^{-m/8}\le\rho/4$, then for every $w_1,\ldots,w_S\in[0,1]$ with $x:=\sum_{i=1}^Sw_i\ge m$,
\[
        \sum_{ij\ell\in E(R)}w_iw_jw_\ell
        \le\frac{\rho}{4}\binom{x}{3}.
\]
\end{lemma}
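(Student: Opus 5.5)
The plan is to pass from the unweighted hypothesis to the weighted statement by randomized rounding. Let $U$ be the random subset of $[S]$ obtained by including each $i$ independently with probability $w_i$. Every triple $ij\ell$ consists of three distinct indices, so
\[
        \mathbb E\,e(R[U])=\sum_{ij\ell\in E(R)}w_iw_jw_\ell .
\]
This identity is the whole reduction: it suffices to bound $\mathbb E\,e(R[U])$. We split according to whether $|U|\ge m/2$:
\[
        \mathbb E\,e(R[U])\le\frac{\rho}{8}\,\mathbb E\binom{|U|}{3}+\mathbb E\Bigl[\binom{|U|}{3}\mathbf 1_{\{|U|<m/2\}}\Bigr].
\]
On the event $|U|\ge m/2$ we used the hypothesis $e(R[U])\le(\rho/8)\binom{|U|}3$. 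On the complement we used the trivial bound $e(R[U])\le\binom{|U|}3$.

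\textbf{Main term.} Write $|U|=X=\sum_i\xi_i$ with $\xi_i\sim\mathrm{Ber}(w_i)$. Expanding $\binom X3$ as a sum over triples of distinct indices gives
\[
        \mathbb E\binom X3=e_3(w_1,\dots,w_S),
\]
the third elementary symmetric polynomial. By Maclaurin's inequality (or directly, since $e_3=\tfrac16\sum_{i,j,\ell\text{ distinct}}w_iw_jw_\ell\le x^3/6$),
\[
        e_3\le\binom x3\cdot\frac{x^3}{6\binom x3}=\binom x3\bigl(1+O(1/x)\bigr).
\]
The $(1+O(1/m))$ factor is harmless here. The cleaner route is Maclaurin with real-binomial normalization: for $w_i\in[0,1]$ with sum $x$, one has $e_3(w)\le\binom x3$, the generalized binomial coefficient. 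This holds because replacing $w$ by a vector of $\lfloor x\rfloor$ ones plus a fractional part maximizes $e_3$ by a smoothing (Schur-concavity) argument, and that value is at most $\binom x3$ as a real polynomial for $x\ge3$. I would prove this small smoothing claim: $e_3$ restricted to two coordinates with a fixed sum is maximized when one of them hits $0$ or $1$, except at the constant term. So the main term contributes at most $(\rho/8)\binom x3$.

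\textbf{Tail term.} Since $\mathbb EX=x\ge m$, the Chernoff bound gives
\[
        \mathbb P(X<m/2)\le\mathbb P(X<x/2)\le e^{-x/8}\le e^{-m/8}.
\]
On this event $\binom X3\le\binom{m/2}3\le\binom x3$. Hence the tail contributes at most $e^{-m/8}\binom x3\le(\rho/4)\binom x3$, which is slightly too much when added to $(\rho/8)\binom x3$.

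\textbf{Main obstacle.} The difficulty is the constant bookkeeping, so the tail must be sharpened. Use the bound $\binom{m/2}3\le\frac18\binom x3$, valid since $x\ge m$ and $m$ is large. Then the tail is at most
\[
        \tfrac18\,e^{-m/8}\binom x3\le\tfrac{\rho}{32}\binom x3 .
\]
Combining the two parts gives a total of at most
\[
        \Bigl(\frac{\rho}{8}+\frac{\rho}{32}\Bigr)\binom x3\le\frac{\rho}{4}\binom x3 .
\]
If the smoothing inequality $e_3\le\binom x3$ proves awkward, fall back on $e_3\le x^3/6\le(1+4/m)\binom x3$. This still fits, with $\rho/8\cdot(1+o(1))+\rho/32<\rho/4$.
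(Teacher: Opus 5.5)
Your argument is correct and is essentially the paper's proof. Both use random rounding with inclusion probabilities $w_i$, apply the hypothesis on $\{|U|\ge m/2\}$, and use Chernoff on the complement. The paper's bounds are cruder: $\mathbb E\binom{|U|}{3}\le x^3/6\le\frac32\binom x3$ and a tail of at most $\frac{m^3}{48}e^{-m/8}\le\frac{e^{-m/8}}{4}\binom x3$, giving $\frac{3\rho}{16}+\frac{\rho}{16}=\frac{\rho}{4}$.

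However, discard your ``cleaner route'': the inequality $e_3(w)\le\binom x3$ is false. For a fixed pair sum $w_1+w_2$, $e_3$ is increasing in $w_1w_2$. So smoothing a pair toward $0/1$ values decreases $e_3$, and the vector of ones plus a fractional part minimizes $e_3$ rather than maximizing it. Concretely, uniform weights $w_i=x/S$ with $S>x$ give $e_3=\frac{x^3}{6}(1-\frac1S)(1-\frac2S)>\binom x3$, which is precisely the regime $S\gg m$ of the application. Only your fallback $e_3\le x^3/6\le(1+4/m)\binom x3$ is valid, and it suffices.
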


\begin{proof}
Choose a random set $X\subseteq[S]$ by including each $i$ independently with probability $w_i$, and write $Y=|X|$. Then
\[
        \mathbb E e(R[X])
        =\sum_{ij\ell\in E(R)}w_iw_jw_\ell.
\]
Let $\mathcal E=\{Y\ge m/2\}$. On $\mathcal E$, we have $e(R[X])\le(\rho/8)\binom Y3$, while on $\mathcal E^c$, $e(R[X])<m^3/48$. Consequently,
\[
        \mathbb E e(R[X])
        \le\frac{\rho}{8}\mathbb E\binom Y3
        +\frac{m^3}{48}\mathbb P(\mathcal E^c).
\] Now $\mathbb E\binom Y3=\sum_{i<j<\ell}w_iw_jw_\ell\le x^3/6\le(3/2)\binom x3$ for $x\ge9$. Moreover, since $x\ge m$, the Chernoff bound gives $\mathbb P(\mathcal E^c)\le\mathbb P(Y<x/2)\le e^{-x/8}\le e^{-m/8}$. Since $\binom x3\ge m^3/12$ for $x\ge m\ge6$,
\[
\begin{aligned}
\mathbb E e(R[X])
&\le
\frac{3\rho}{16}\binom x3
+\frac{e^{-m/8}}4\binom x3
\le
\frac{\rho}{4}\binom x3.
\end{aligned}
\]
\end{proof}

The ordered palette also yields the blue obstruction needed at uniformity $k$. We first show that the absence of a large blue clique survives the stepping-up recursion. Define $t_3=q+2$ and $t_\ell=2t_{\ell-1}-1$ for $4\le\ell\le k$. Thus
\begin{equation}\label{eq:t-explicit}
        t_\ell=2^{\ell-3}(q+1)+1.
\end{equation}

\begin{lemma}\label{lem:blue-clique}
For every $\ell\in[3,k]$, the coloring $\phi_m^{(\ell)}$ contains no blue copy of
$
        K_{t_\ell}^{(\ell)}
$
on pairwise distinct vertices.
\end{lemma}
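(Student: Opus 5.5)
We argue by induction on $\ell$, with $t_3=q+2$ as the base case and the recursion $t_\ell=2t_{\ell-1}-1$ supplying the inductive step.

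\emph{Base case $\ell=3$.} Let $x_1<\cdots<x_{q+2}$ be distinct and suppose every triple is blue under $\phi_m^{(3)}$. Since the triples are on distinct vertices, blue means $\xi_m(x_1x_j)\ne\xi_m(x_1x_{j'})$ for all $2\le j<j'\le q+2$. So the $q+1$ pairs $x_1x_j$ would have pairwise distinct colors in $[q]$, which contradicts pigeonhole. Hence no blue $K^{(3)}_{q+2}$ exists.

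\emph{Inductive step.} Let $\ell\ge4$ and suppose $x_1<\cdots<x_t$ are distinct with $t=t_\ell=2t_{\ell-1}-1$, and every $\ell$-subset is blue under $\phi_m^{(\ell)}$. Put $\delta_i=\delta(x_i,x_{i+1})$ for $1\le i\le t-1$. Two observations drive the argument.

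\begin{itemize}
\item[(a)] For any $i_1<\cdots<i_\ell$, Property C gives $\delta(x_{i_j},x_{i_{j+1}})=\max_{i_j\le s<i_{j+1}}\delta_s$. So the $\delta$-sequence of a chosen $\ell$-subset consists of block maxima of consecutive segments of $(\delta_s)$.
\item[(b)] Property B, applied to consecutive triples, shows that consecutive $\delta_s$ are distinct. Since all $x_i$ are distinct, every $\delta_s>0$.
\end{itemize}

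Now locate the position $p$ of the global maximum of $\delta_1,\ldots,\delta_{t-1}$, which is unique by Property C. This position splits the index set.

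\emph{Extremal cut forces red.} Suppose we choose an $\ell$-subset whose first gap or last gap contains the cut $x_p x_{p+1}$. Then its $\delta$-sequence has its maximum in position $1$ or $\ell-1$. By the definition of $\phi_m^{(\ell)}$, the subset is then either monotone, so its color is given by $\phi^{(\ell-1)}$, or it is red. Red is excluded, so every such subset must be monotone with a blue $\phi^{(\ell-1)}$-value.

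\emph{Pigeonhole on the sides.} One of the two sides $\{x_1,\ldots,x_p\}$ or $\{x_{p+1},\ldots,x_t\}$ has at least $t_{\ell-1}$ points, because $t=2t_{\ell-1}-1$. Say the right side does; the left side is symmetric. I would aim to produce a blue $K^{(\ell-1)}_{t_{\ell-1}}$ in $\phi^{(\ell-1)}$ on distinct vertices from the $\delta$-values on the big side, contradicting the inductive hypothesis. Concretely, take $y_j=\delta(x_p,x_{p+j})$, or more usefully the sequence of "record" values.

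The main obstacle is showing that enough $\ell$-subsets have monotone $\delta$-sequences that realize prescribed $(\ell-1)$-multisets of distinct values. A naive choice fails, since block maxima need not be monotone. The cleanest route is a second pigeonhole, Erdős–Szekeres style, inside the big side. Consider the records of $(\delta_s)$ scanned outward from the cut: these are distinct and form monotone chains. Then pick subsets $x_p$ together with points positioned just after records. Their $\delta$-sequences are then the cut value followed by a decreasing chain of records, which is monotone, and $\phi^{(\ell)}=\phi^{(\ell-1)}$ of those values.

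I expect the careful bookkeeping of which side and which monotone direction applies to be the delicate part. In particular, one must check that the records number at least $t_{\ell-1}$, which may require instead recursing on the big side with its own maximum. If the direct counting is too lossy, I would fall back on strong induction over the segment: apply the same splitting recursively until a monotone run of length $t_{\ell-1}$ of distinct $\delta$ values is found. Such a run, combined with (a), gives a blue $K^{(\ell-1)}_{t_{\ell-1}}$.
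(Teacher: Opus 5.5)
\paragraph{Verdict: genuine gap in the inductive step.} Your base case is correct, and so is your observation that an $\ell$-set whose first or last gap contains the cut must have a monotone $\delta$-sequence. But you never show that the large side supplies $t_{\ell-1}$ usable $\delta$-values. You leave open whether the records number at least $t_{\ell-1}$, and none of your fallbacks can settle this:
\begin{itemize}
\item An Erd\H{o}s--Szekeres monotone subsequence of $(\delta_s)$ has length only about $\sqrt{|R|}$. Its entries are also generally not the block maxima $\delta(x_{i_r},x_{i_{r+1}})$ of any $\ell$-set, as you note yourself.
\item Re-splitting the big side at its own maximum halves the size at each step.
\end{itemize}
Neither is compatible with the recursion $t_\ell=2t_{\ell-1}-1$. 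Your proposed witnesses also have a structural problem. If every chosen $\ell$-set begins with $x_p$, then every $(\ell-1)$-multiset you realize contains $\delta_p$. You would therefore control only the $(\ell-1)$-sets through $\delta_p$, not a full blue clique in $\phi_m^{(\ell-1)}$.

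\paragraph{The missing step.} Use your extremal-cut observation to force monotonicity of the whole run on the big side, not just of individual sets. Suppose $|R|\ge t_{\ell-1}$, where $R=\{x_{p+1},\ldots,x_t\}$.
\begin{itemize}
\item For each $j\in[p+1,t-2]$, take an $\ell$-set consisting of $x_p$, the three consecutive vertices $x_j,x_{j+1},x_{j+2}$, and further vertices of $R$. This is where $\ell\ge4$ is used.
\item The first induced $\delta$-value of this set is $\delta_p$, the unique maximum. Blueness forces the sequence to be monotone, hence strictly decreasing, since consecutive values differ by Property~B. The consecutive triple then gives $\delta_j>\delta_{j+1}$.
\item Combined with the choice of $p$, this yields $\delta_p>\delta_{p+1}>\cdots>\delta_{t-1}$. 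So every position is a record, and you have $|R|\ge t_{\ell-1}$ distinct values.
\end{itemize}
Now take arbitrary $p\le i_1<\cdots<i_{\ell-1}\le t-1$ and the $\ell$-set $x_{i_1},\ldots,x_{i_{\ell-1}},x_{i_{\ell-1}+1}$, which need not contain $x_p$. By Property~C its induced $\delta$-sequence is exactly $\delta_{i_1}>\cdots>\delta_{i_{\ell-1}}$. It is blue by hypothesis, so the monotone branch gives $\phi_m^{(\ell-1)}(\{\delta_{i_1},\ldots,\delta_{i_{\ell-1}}\})=\blue$. This is a blue $K^{(\ell-1)}_{|R|}$ on distinct vertices, contradicting the induction hypothesis.

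The left side is the mirror image. Use $x_{p+1}$ as the last vertex to obtain $\delta_1<\cdots<\delta_p$, then use the $\ell$-sets $x_{i_1},x_{i_1+1},x_{i_2+1},\ldots,x_{i_{\ell-1}+1}$.
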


\begin{proof}
We argue by induction on $\ell$. For $\ell=3$, suppose that
$x_1<\cdots<x_{q+2}$
span a blue $K_{q+2}^{(3)}$. Then, for every $2\le a<b\le q+2$, the triple
$x_1x_ax_b$ is blue, and hence
$\xi_m(x_1x_a)\ne\xi_m(x_1x_b).$
Thus
\[
        \xi_m(x_1x_2),\ldots,\xi_m(x_1x_{q+2})
\]
are $q+1$ pairwise distinct colors, contradicting the fact that $\xi_m$ uses only $q$ colors.

Now let $\ell\ge4$ and assume the statement holds for $\ell-1$. Suppose for contradiction that $X=\{x_1<\cdots<x_t\}$ spans a blue $K_t^{(\ell)}$, where $t=t_\ell=2t_{\ell-1}-1$. For $1\le i\le t-1$, put $\delta_i=\delta(x_i,x_{i+1})$.
Since $x_1<x_t$, Property~C implies that the maximum of
$\delta_1,\ldots,\delta_{t-1}$ is attained at a unique index; let $p$ be this index. Set
\[
        L=\{x_1,\ldots,x_p\},\qquad
        R=\{x_{p+1},\ldots,x_t\}.
\]
Since $|L|+|R|=2t_{\ell-1}-1$, one of $L,R$ has size at least $t_{\ell-1}$.

Suppose first that $|R|\ge t_{\ell-1}$. Since $q\to\infty$ with $m$, we may assume throughout that $t_{\ell-1}\ge\ell-1$, so all auxiliary $\ell$-sets chosen below exist. We claim that
\begin{equation}\label{eq:right-decreasing}
        \delta_p>\delta_{p+1}>\cdots>\delta_{t-1}.
\end{equation}
The first inequality follows from the choice of $p$. Fix $j\in[p+1,t-2]$. Choose an $\ell$-set $Y\subseteq X$ containing
$x_p,x_j,x_{j+1},x_{j+2}$
and with all remaining vertices chosen from $R$. In the increasing ordering of $Y$, the first induced $\delta$-value is $\delta_p$, which is the unique maximum of the induced $\delta$-sequence. If this sequence were not monotone, then its maximum would occur at the first position, and the definition of $\phi_m^{(\ell)}$ would color $Y$ red. Since $Y$ is blue, the sequence is monotone; as its first term is the unique maximum, it is strictly decreasing. The consecutive vertices $x_j,x_{j+1},x_{j+2}$ therefore give
$\delta_j>\delta_{j+1}.$
This proves~\eqref{eq:right-decreasing}.

Let
\[
        D_R=\{\delta_p,\delta_{p+1},\ldots,\delta_{t-1}\}.
\]
We show that $D_R$ spans a blue complete $(\ell-1)$-graph. Choose arbitrary indices
$p\le i_1<\cdots<i_{\ell-1}\le t-1.$
Consider the $\ell$ vertices
$x_{i_1},x_{i_2},\ldots,x_{i_{\ell-1}},x_{i_{\ell-1}+1}.$
By~\eqref{eq:right-decreasing} and Property~C, their induced $\delta$-sequence is exactly
$\delta_{i_1}>\delta_{i_2}>\cdots>\delta_{i_{\ell-1}}.$
The corresponding $\ell$-edge is blue, so the monotone branch of the stepping-up definition gives
$\phi_m^{(\ell-1)} \bigl(\{\delta_{i_1},\ldots,\delta_{i_{\ell-1}}\}\bigr)=\blue.$
Hence $D_R$ spans a blue $K_{|R|}^{(\ell-1)}$, contradicting $|R|\ge t_{\ell-1}$ and the induction hypothesis.

The case $|L|\ge t_{\ell-1}$ is symmetric. For completeness, the same argument, now choosing an $\ell$-set containing
$x_j,x_{j+1},x_{j+2},x_{p+1}$
with all remaining vertices in $L$, gives
$\delta_1<\delta_2<\cdots<\delta_p.$
For arbitrary
$1\le i_1<\cdots<i_{\ell-1}\le p,$
the $\ell$ vertices
$x_{i_1},x_{i_1+1},x_{i_2+1},\ldots,x_{i_{\ell-1}+1}$
have induced $\delta$-sequence
$\delta_{i_1}<\delta_{i_2}<\cdots<\delta_{i_{\ell-1}}.$
Thus $\{\delta_1,\ldots,\delta_p\}$ spans a blue $K_{|L|}^{(\ell-1)}$, again contradicting the induction hypothesis. This completes the induction.
\end{proof}

The preceding lemma gives the blue obstruction needed at uniformity $k$.

\begin{lemma}\label{lem:blue-blocker}
Let
$
        Q=(k-1)(t_k-1)+1.
$
Then the complete $k$-graph $K_Q^{(k)}$ admits no blue monochromatic embedding into $\phi_m^{(k)}[b]$, for any $b>0$.
\end{lemma}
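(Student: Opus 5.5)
Suppose, for contradiction, that $h:V(K_Q^{(k)})\to[0,M_k(m))$ is a blue monochromatic embedding into $\phi_m^{(k)}[b]$. The plan is to extract from it $t_k$ vertices whose images are pairwise distinct and span a blue $K_{t_k}^{(k)}$ in $\phi_m^{(k)}$, which Lemma~\ref{lem:blue-clique} (with $\ell=k$) forbids.

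The first step is to bound the fibers. Note that a \emph{monochromatic} embedding requires every edge to receive blue, including edges whose images are not all distinct. By definition, the multiset coloring treats repeated images carefully, so I will check what color a $k$-multiset with a repetition receives. At level $3$, any multiset with a repeated entry is red by definition. At level $\ell\ge4$, a repetition $x_i=x_{i+1}$ gives $\delta_i=0$. If the $\delta$-sequence is not monotone, then $x_1<x_\ell$ and the zero value is not the maximum. The sequence then either falls into the red branch or the blue branch depending on where the maximum lies, so I cannot conclude red directly. Instead, I claim that any fiber of size at least $k-1$ is impossible. If $h$ maps $k-1$ vertices to the same value $y$, then adding any further vertex $v$ gives a $k$-multiset with $k-1$ equal entries. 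Its $\delta$-sequence has at most one nonzero entry, placed at the first or last position. Such a sequence is either monotone or has its maximum at index $1$ or $k-1$. In the non-monotone case the color is red. In the monotone case the color recurses to $\phi^{(k-1)}$ of a multiset with at least $k-2$ zeros, and by induction down to level $3$ that multiset contains repetitions and is red. If no further vertex exists, then all $Q$ vertices lie in one fiber, the edge is the constant multiset, and a constant multiset is red at every level by the same recursion. Hence every fiber has at most $k-2$ vertices.

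The key step is this fiber bound, and I would verify the recursion carefully. For a multiset with $k-1$ equal entries and one other entry, the sequence $\delta$ is $(0,\dots,0,d)$ or $(d,0,\dots,0)$, which is monotone. Its color is therefore $\phi^{(k-1)}$ of a multiset containing repeated zeros, which again has at least $k-2$ equal entries. Inducting down, level $3$ sees a repetition and returns red. This is the main technical point: I must confirm that the monotone recursion preserves the property ``at least $\ell-1$ equal entries'' at each level $\ell$, so that the induction closes.

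With each fiber of size at most $k-2$, pigeonhole gives at least $\lceil Q/(k-2)\rceil\ge t_k$ distinct image values, since $Q=(k-1)(t_k-1)+1>(k-2)(t_k-1)$. Choosing one vertex from each of $t_k$ distinct fibers, every $k$-subset of these vertices is an edge of $K_Q^{(k)}$ with distinct images colored blue. This yields a blue $K_{t_k}^{(k)}$ on pairwise distinct vertices, contradicting Lemma~\ref{lem:blue-clique}. The stated constant $Q$ is slightly generous for this argument, so if my fiber bound turned out to be $k-1$ rather than $k-2$ (for instance, due to an edge case in the recursion), the pigeonhole would still need $Q>(k-1)(t_k-1)$, which the definition of $Q$ provides exactly. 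That suggests the intended proof uses fibers of size at most $k-1$, and the robust version of the argument needs only that weaker bound.
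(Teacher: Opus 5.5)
Your proof is correct and follows the paper's argument: a red repeated-image multiset bounds the fiber sizes, pigeonhole then gives at least $t_k$ distinct images, and Lemma~\ref{lem:blue-clique} gives the contradiction. The paper uses only that the constant $k$-multiset is red, which caps fibers at $k-1$ and gives exactly $\lceil Q/(k-1)\rceil=t_k$ values; your stronger cap of $k-2$ is also valid, since the $\delta$-sequences $(0,\ldots,0,d)$ and $(d,0,\ldots,0)$ are monotone and the recursion ends at a repeated triple at level $3$, but it is not needed.
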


\begin{proof}
First observe that for every $y\in[0,M_k(m))$,
\begin{equation}\label{eq:all-equal-red}
        \phi_m^{(k)}(\{y,\ldots,y\})=\red.
\end{equation}
Indeed, this is immediate from the definition when $k=3$; for $k\ge4$, all induced $\delta$-values are zero, so the monotone branch recursively reduces to the $3$-uniform base case.

Suppose that
$h:V(K_Q^{(k)})\to[0,M_k(m))$
is a blue monochromatic embedding. By~\eqref{eq:all-equal-red}, every fiber of $h$ contains at most $k-1$ vertices. Hence the image of $h$ contains at least
$\left\lceil\frac{Q}{k-1}\right\rceil=t_k$
distinct values. Choose one vertex from each of $t_k$ distinct fibers. Since $K_Q^{(k)}$ is complete, every $k$ chosen vertices form an edge, and hence their $k$ distinct images form a blue edge of $\phi_m^{(k)}$. Thus these $t_k$ images span a blue $K_{t_k}^{(k)}$, contradicting Lemma~\ref{lem:blue-clique}.
\end{proof}

To complement the deterministic blue obstruction, we now build the robust random component that excludes almost red embeddings at the terminal $3$-uniform level. A near-perfect $k$-uniform matching on $[n]$ is obtained from a permutation of $[n]$ by grouping the first $k\lfloor n/k\rfloor$ entries into consecutive blocks of size $k$.

The concentration input needed to make this construction uniform over all large set choices is the following bounded-differences estimate for random permutations.

\begin{lemma}[Permutation bounded differences]\label{lem:permutation-bd}
Let $\pi_1,\ldots,\pi_d$ be independent uniformly random permutations of $[n]$, and let $Z=f(\pi_1,\ldots,\pi_d)$. Assume that swapping two entries in any one of the permutations changes $Z$ by at most $c$. Then, for every $t>0$,
\[
        \mathbb P(Z\le \mathbb EZ-t)
        \le
        \exp\left(-\frac{t^2}{8dnc^2}\right).
\]
\end{lemma}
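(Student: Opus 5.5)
We would derive Lemma~\ref{lem:permutation-bd} from the Azuma--Hoeffding inequality applied to a Doob martingale that reveals the permutations one entry at a time. Order the exposure as follows: first reveal $\pi_1(1),\pi_1(2),\ldots,\pi_1(n)$, then $\pi_2(1),\ldots,\pi_2(n)$, and so on up to $\pi_d$. This gives a filtration $\mathcal F_0\subseteq\mathcal F_1\subseteq\cdots\subseteq\mathcal F_{dn}$. Put $Z_s=\mathbb E[Z\mid\mathcal F_s]$, so that $Z_0=\mathbb EZ$ and $Z_{dn}=Z$. If every increment satisfies $|Z_s-Z_{s-1}|\le 2c$, then Azuma's inequality gives
\[
\mathbb P(Z\le\mathbb EZ-t)\le\exp\left(-\frac{t^2}{2\cdot dn\cdot(2c)^2}\right)=\exp\left(-\frac{t^2}{8dnc^2}\right),
\]
which is exactly the stated bound.

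The main step is the increment bound, which we would prove by a switching coupling. Fix a step $s$ at which the value $\pi_i(j)$ is revealed, with the earlier entries of $\pi_i$ and the earlier permutations $\pi_1,\ldots,\pi_{i-1}$ already fixed. Let $A$ be the set of values still available for $\pi_i(j)$. For two candidate values $a,a'\in A$, consider the map on completions that, whenever $\pi_i(j)=a$ and $\pi_i(j')=a'$ for some later position $j'>j$, swaps these two entries. This map is a bijection between the completions with $\pi_i(j)=a$ and those with $\pi_i(j)=a'$. It preserves the uniform conditional measure, and the later permutations $\pi_{i+1},\ldots,\pi_d$ are independent and untouched. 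By hypothesis a single swap changes $Z$ by at most $c$, so
\[
\bigl|\mathbb E[Z\mid\mathcal F_{s-1},\pi_i(j)=a]-\mathbb E[Z\mid\mathcal F_{s-1},\pi_i(j)=a']\bigr|\le c .
\]
Since $Z_{s-1}$ is an average of the quantities $\mathbb E[Z\mid\mathcal F_{s-1},\pi_i(j)=a]$ over $a\in A$, we get $|Z_s-Z_{s-1}|\le c$, which is even better than the $2c$ we need.

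The only delicate point is checking that the swapping map is indeed measure-preserving conditional on the revealed prefix. It is, because conditional on the prefix the remaining entries of $\pi_i$ form a uniformly random bijection from the remaining positions to $A$, and transposing two values is a bijection on this set of bijections. The last revealed entry of each permutation is determined by the earlier ones, so its increment is $0$; this only helps. With the increment bound in hand, Azuma's inequality finishes the proof, and the constant $8$ comfortably absorbs the slack.
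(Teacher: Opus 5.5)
Your proposal is correct and matches the paper's proof: a Doob martingale exposing the $dn$ entries one at a time, a single-transposition coupling of the conditional completions to bound the increments, and Azuma--Hoeffding. Your averaging step bounds the increments by $c$ rather than the paper's $2c$, which gives the stronger exponent $t^2/(2dnc^2)$ and hence implies the stated bound.
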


\begin{proof}
Expose the $d$ permutations one coordinate at a time and consider the associated Doob martingale. There are at most $dn$ exposure steps. If the value revealed at one step is changed while the previous history is kept fixed, the two conditional completions can be coupled by a single transposition in the corresponding permutation. Hence the relevant martingale difference is bounded in absolute value by $2c$. Azuma--Hoeffding therefore gives
\[
        \mathbb P(Z\le \mathbb EZ-t)
        \le
        \exp\left(-\frac{t^2}{8dnc^2}\right).
\]
\end{proof}

We can now combine the weighted palette estimate with the random matching construction.

\begin{lemma}\label{lem:robust-random}
Fix $k\ge3$ and $\alpha\in(0,1]$. Let $\xi_m$ be a pair-coloring supplied by Lemma~\ref{lem:ordered-palette}, and let $R_m$ be the red $3$-graph determined by this ordered coloring. If $c_0=c_0(k,\alpha)>0$ is sufficiently small, then for all sufficiently large positive integers $m$ and $n$ there exists a $k$-graph $H_R$ on $[n]$ with $\Delta(H_R)\le m$ such that the following holds. For every collection of pairwise disjoint sets $U,W_4,\ldots,W_k\subseteq[n]$ of size at least $\alpha n$, and every map $h:U\to[0,M_3(m))$ satisfying $|h^{-1}(y)|\le |U|/m$ for every $y$, there is an edge $\{x,y,z,w_4,\ldots,w_k\}\in E(H_R)$ with $x,y,z\in U$ and $w_i\in W_i$ such that
\[
        \phi_m^{(3)}(\{h(x),h(y),h(z)\})=\blue.
\]
For $k=3$, the sets $W_4,\ldots,W_k$ are absent.
\end{lemma}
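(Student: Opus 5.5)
The plan is to take $H_R$ as the union of $d=\lfloor m/1\rfloor$ (more precisely $d\le m$) independent uniformly random near-perfect $k$-uniform matchings $\mathcal M_1,\ldots,\mathcal M_d$ on $[n]$, generated by independent permutations $\pi_1,\ldots,\pi_d$. Every vertex lies in at most one edge of each matching, so $\Delta(H_R)\le d\le m$ holds deterministically. It then remains to show that, with positive probability, the conclusion holds simultaneously for all choices of $U,W_4,\ldots,W_k$ and all admissible maps $h$. The conclusion is needed only up to red triples, since an edge whose triple is red is exactly what the adversary wants. So for fixed data we count the number $Z$ of matching edges $\{x,y,z,w_4,\ldots,w_k\}$ with $x,y,z\in U$, $w_i\in W_i$ and $\phi^{(3)}_m(\{h(x),h(y),h(z)\})=\blue$, and we show $Z>0$.

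First I will compute $\mathbb E Z$. In one random matching, a fixed $k$-set with the prescribed type (three vertices in $U$, one in each $W_i$) appears as an edge with probability $\Theta_k(n^{1-k})$. Hence $\mathbb E Z=\Theta_k(d\,n^{1-k})\prod|W_i|$ times the number of blue triples in $U$ under $h$. Group $U$ into fibers $h^{-1}(y)$ with weights $w_y=|h^{-1}(y)|\cdot m/|U|\in[0,1]$; then $\sum_y w_y=m$. A triple is red if two of its vertices share a fiber, which covers at most $O(|U|^3/m)$ triples, or if it lies in three distinct fibers forming an edge of $R_m$. The latter count is $(|U|/m)^3\sum_{ij\ell\in E(R)}w_iw_jw_\ell\le(\rho/4)\binom{|U|}{3}(1+o(1))$ by Lemma~\ref{lem:weighted-palette}, applied via Lemma~\ref{lem:ordered-palette} with $x=m$. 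So at least half of all triples in $U$ are blue, and $\mathbb E Z\ge c(k,\alpha)\,d\,n$.

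Next comes concentration. Swapping two entries of one permutation changes at most four matching edges, so $Z$ changes by at most $c=4$. Lemma~\ref{lem:permutation-bd} with $t=\mathbb E Z$ then gives $\mathbb P(Z=0)\le\exp(-c'(k,\alpha)\,d\,n)$. The union bound has at most $(k-1)^{n}$ choices for the sets, times the number of maps $h$. This count of maps is the main obstacle, since $h$ ranges over $M_3(m)^{n}=\exp(c_0 m n\log\log m)$ possibilities, which beats $e^{-c'dn}$ once $d\le m$. I would handle it by observing that $Z$ depends on $h$ only through its fiber partition of $U$ together with the colors between fibers. I would also pass to a bounded-size reduction: since $Z$ is a count over matching edges, we could instead argue that it suffices to control $h$ on a randomly chosen skeleton. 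Failing that, I would exploit that the $\log\log m$ factor sits in $c_0$, which is chosen small depending on $k,\alpha$. Concretely, I would take $d=m$ and require $c_0 m\log\log m\cdot n\ll c'\,m\,n$. As stated, this fails by a $\log\log m$ factor. So the key refinement is to sharpen the deviation bound: exploit that the failure event forces the red fraction to exceed $\rho$ and gain a factor $\log(1/\rho)\sim\log\log m$ via a Chernoff-type (multiplicative) rather than Azuma estimate, or equivalently use that the probability that all $\Theta(dn)$ relevant edges are red is roughly $\rho^{\Theta(dn)}=\exp(-\Theta(dn\log\log m))$. With $c_0$ small this then absorbs the $\exp(c_0 mn\log\log m)$ union bound together with the $(k-1)^n$ set choices, and it finishes the proof.
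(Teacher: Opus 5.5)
You have the paper's construction (a union of $m$ independent random near-perfect matchings), the same bound of $O(\rho)\binom{|U|}3$ on the triples of $U$ that are red or have a repeated image, and the correct diagnosis that Azuma plus a union bound over $h$ loses exactly a $\log\log m$ factor. You also correctly identify $\rho^{\Theta(mn)}$ as the source of the gain. But the repair, as you formulate it, cannot work.

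You plan to bound $\mathbb P(Z=0)$ by $\exp(-\Theta(mn\log\log m))$ for each fixed $(U,W_4,\ldots,W_k,h)$ and then union bound over all choices. No deviation inequality can give this. The event $\{Z=0\}$ contains the $h$-independent event that no edge of $H_R$ is active, i.e.\ has exactly three vertices in $U$ and one in each $W_i$. When, say, $|U|\le n/2$ (which the lemma must cover whenever $\alpha\le1/2$), fix a position set $P$ of size $|U|$ meeting no block in exactly three points. Each permutation places $U$ exactly on $P$ with probability $\binom n{|U|}^{-1}\ge2^{-n}$. Hence $\mathbb P(Z=0)\ge2^{-mn}$ for every $h$, with no dependence on $\rho$. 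There are at least $(M_3(m)/2)^{|U|}$ admissible maps, so the union bound diverges. In particular, no lower-tail estimate for your blue count $Z$, Chernoff or otherwise, can supply the missing $\log\log m$.

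The missing idea is to decouple the two sources of failure.

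\textbf{The $h$-independent part.} Let $X$ be the number of active edges. Since $X$ does not depend on $h$, Lemma~\ref{lem:permutation-bd} together with a union bound over only the $(k-1)^n$ set choices gives an event $\mathcal E$ of probability at least $3/4$. On $\mathcal E$, $X\ge\tau mn$ for all choices of sets simultaneously.

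\textbf{The $h$-dependent part.} On $\mathcal E$, an almost red $h$ forces at least $\tau mn$ active blocks into the family $\mathcal K_h$ of active $k$-sets whose $U$-triple is red or has a repeated image, and $|\mathcal K_h|\le(\rho/2)\binom nk$. Turning this into a $\rho^{\Theta(mn)}$ bound still requires controlling two things: the active positions are random, and the blocks of a single matching are dependent. The paper handles both with a certificate. It fixes $r_0=\lfloor\tau mn/10\rfloor$ block positions, at most $\lfloor n/(4k)\rfloor$ per matching, giving at most $2^{mn/k}$ choices. It then exposes these blocks one at a time. Each exposed block is uniform on at least $3n/4$ unexposed vertices, so it lies in $\mathcal K_h$ with conditional probability at most $C_1\rho$. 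This yields
$\mathbb P(\mathcal E\text{ and }h\text{ bad})\le2^{mn/k}(C_1\rho)^{r_0}=\exp(-\Theta(mn\log\log m))$, which beats $(k-1)^nM_3(m)^n$ once $c_0$ is small.

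An upper-tail multiplicative bound on the number of red active blocks would also work: its mean is $O(\rho mn)$, against the threshold $\tau mn$. However, it too needs the same decoupling and the same control of dependence within a matching.
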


\begin{proof}
Put $r=\lfloor n/k\rfloor$. Choose independently $m$ random near-perfect $k$-uniform matchings on $[n]$, each generated from an independent uniformly random permutation. Denote them by $\mathcal M_1,\ldots,\mathcal M_m$, and let
\[
        H_R=\mathcal M_1\cup\cdots\cup\mathcal M_m.
\]
Clearly $\Delta(H_R)\le m$.

Fix pairwise disjoint $U,W_4,\ldots,W_k$, each of size at least $\alpha n$. Call an edge of a matching \emph{active} if it contains exactly three vertices of $U$ and exactly one vertex of each $W_i$, $i=4,\ldots,k$. Let $X_j$ be the number of active edges in $\mathcal M_j$, and put $X=\sum_{j=1}^mX_j$.
A fixed block of a random matching is a uniformly random $k$-subset of $[n]$, so the probability that it is active is
\[
        \frac{\binom{|U|}{3}\prod_{i=4}^k|W_i|}{\binom nk}.
\]
For sufficiently large $n$, this probability is at least $p_0:=k!\alpha^k/24>0$. Since $r\ge n/(2k)$,
\[
        \mathbb E X\ge \mu_0mn,\qquad \mu_0:=\frac{p_0}{2k}.
\]
Swapping two entries in one of the $m$ underlying permutations changes $X$ by at most $2$. Applying Lemma~\ref{lem:permutation-bd} with $d=m$, $c=2$ and $t=\mathbb EX/2$, and using $\mathbb EX\ge\mu_0mn$, gives
\[
        \mathbb P\left(X<\frac12\mathbb EX\right)
        \le
        \exp(-c_1mn)
\]
for a constant $c_1=c_1(k,\alpha)>0$. Put $\tau=\mu_0/2$; thus $\mathbb P(X<\tau mn)\le e^{-c_1mn}$. There are at most $(k-1)^n$ choices for the ordered tuple of pairwise disjoint sets $U,W_4,\ldots,W_k$. Since $m$ is sufficiently large, a union bound shows that, with probability at least $3/4$, the following event $\mathcal E$ holds:
\[
        X\ge\tau mn
\]
for every such choice of $U,W_4,\ldots,W_k$.

We next show that, with positive probability, no map satisfying the fiber bound violates the conclusion of the lemma on the event $\mathcal E$. For fixed $U$, call a map
$h:U\to[0,M_3(m))$ \emph{capacity-respecting} if
\[
        |h^{-1}(y)|\le \frac{|U|}{m}
        \qquad\text{for every }y\in[0,M_3(m)).
\]

Fix $U,W_4,\ldots,W_k$ and a capacity-respecting map $h$. Write $u=|U|$, $a_y=|h^{-1}(y)|$, and $b=u/m$.
Define $\mathcal A_h\subseteq\binom U3$ by declaring $xyz\in\mathcal A_h$ if either $h(x),h(y),h(z)$ are not pairwise distinct, or they are pairwise distinct and $\phi_m^{(3)}(\{h(x),h(y),h(z)\})=\red$. The number of triples of the first type is at most
\[
\begin{aligned}
        u\sum_y\binom{a_y}{2}
        &\le
        \frac u2\left(\max_y a_y\right)\sum_ya_y
        \le
        \frac{u^3}{2m}
        \le
        \frac4m\binom u3.
\end{aligned}
\]
For the second type, put $w_y=a_y/b$. Since a capacity-respecting map on the nonempty set $U$ exists, necessarily $b=u/m\ge1$. Thus $0\le w_y\le1$ and $\sum_yw_y=m$.
By Lemmas~\ref{lem:ordered-palette} and~\ref{lem:weighted-palette},
\[
        \sum_{ij\ell\in E(R_m)}w_iw_jw_\ell
        \le\frac\rho4\binom m3.
\]
Consequently, the number of red transversal triples is at most $(\rho b^3/4)\binom m3\le(\rho/4)\binom u3$, because
\[
        b^3\binom m3
        =\frac{u(u-b)(u-2b)}6
        \le\binom u3.
\]
Since $4/m\le\rho/4$ for sufficiently large $m$,
\begin{equation}\label{eq:Ah-density}
        |\mathcal A_h|
        \le\frac\rho2\binom u3.
\end{equation}

Let $\mathcal K_h$ be the family of all active $k$-sets whose three vertices in $U$ form a member of $\mathcal A_h$. Then
\begin{equation}\label{eq:Kh-density}
        |\mathcal K_h|
        =|\mathcal A_h|\prod_{i=4}^k|W_i|
        \le\frac\rho2\binom nk.
\end{equation}

For the fixed sets $U,W_4,\ldots,W_k$, call a capacity-respecting map $h$ \emph{bad} if it is an almost red embedding of
$H_R(U;W_4,\ldots,W_k)$
into $\phi_m^{(3)}[u/m]$. Suppose that $\mathcal E$ holds and that $h$ is bad. Then every active edge in every matching $\mathcal M_j$ belongs to $\mathcal K_h$, and
$\sum_{j=1}^mX_j\ge\tau mn.$
Set $L=\lfloor n/(4k)\rfloor$.
Since $X_j\le n/k$ and $kL/n\ge1/5$ for sufficiently large $n$,
\[
        \sum_{j=1}^m\min\{X_j,L\}
        \ge\frac{kL}{n}\sum_{j=1}^mX_j
        \ge\frac{\tau mn}{5}.
\]
Hence, whenever $\mathcal E$ holds and $h$ is bad, the realized matchings determine a set $\mathcal P$ of exactly $r_0=\lfloor\tau mn/10\rfloor$ active block positions, with at most $L$ selected positions in each matching, such that every block indexed by $\mathcal P$ belongs to $\mathcal K_h$. Thus a bad map on $\mathcal E$ has a certificate consisting of the position set $\mathcal P$ together with the requirement that all its prescribed blocks lie in $\mathcal K_h$.

There are at most
$2^{mr}\le2^{mn/k}$
possible choices for $\mathcal P$. Fix one such position set and reveal its prescribed blocks successively, matching by matching. Within any fixed matching, before the next prescribed block is exposed, fewer than $L$ prescribed blocks from that matching have already been revealed, so at least
$n-kL\ge\frac{3n}{4}$
vertices remain. Conditional on all previously exposed prescribed blocks, the next prescribed block position is a uniformly random $k$-subset of the remaining vertices of its matching. By~\eqref{eq:Kh-density}, its conditional probability of belonging to $\mathcal K_h$ is at most
\[
        \frac{(\rho/2)\binom nk}{\binom{3n/4}{k}}
        \le C_1\rho,
\]
where $C_1=C_1(k)$. Therefore the probability that all $r_0$ prescribed blocks belong to $\mathcal K_h$ is at most
$(C_1\rho)^{r_0}.$

The number of choices of $U,W_4,\ldots,W_k$ is at most $(k-1)^n$, and the number of maps $h:U\to[0,M_3(m))$ is at most $M_3(m)^n$. Let $P_{\mathrm{bad}}$ denote the probability that $\mathcal E$ holds and some capacity-respecting map $h$ is an almost red embedding. Then
\[
P_{\mathrm{bad}}
\le
(k-1)^nM_3(m)^n2^{mn/k}(C_1\rho)^{r_0}.
\]
For sufficiently large $m$,
$\log(C_1\rho)\le-\frac18\log\log m$
and
$r_0\ge\frac{\tau mn}{20}.$
Since $\log M_3(m)\le c_0m\log\log m,$
taking logarithms in the preceding bound gives
\begin{align*}
\log P_{\mathrm{bad}}
&\le n\log(k-1)+n\log M_3(m)+\frac{mn}{k}\log2+r_0\log(C_1\rho)\\
&\le n\log(k-1)+c_0mn\log\log m+\frac{mn}{k}\log2
      -\frac{\tau mn}{160}\log\log m.
\end{align*}
It follows that
\[
        \log P_{\mathrm{bad}}
        \le
        \left(c_0-\frac{\tau}{160}+o(1)\right)
        mn\log\log m.
\]
Thus, for example, choosing $0<c_0<\tau/320$ gives $P_{\mathrm{bad}}<1/4$ for all sufficiently large $m$ and $n$.

Combining this with $\mathbb P(\mathcal E^c)<1/4$, we obtain a realization of $H_R$ with the required property.
\end{proof}

\section{Proofs of the main theorems}\label{sec:proofs}

\medskip
\noindent\textbf{Proof strategy.}
The proof uses the stepping-up scheme of Brada\v{c}, Hunter and Sudakov~\cite{BHS}, but the $3$-uniform base coloring and the random part of the bounded-degree hypergraph are replaced by more robust objects. At the base level, we choose a $q$-coloring $\xi$ of the pairs of a set of size $\exp(\Theta_k(m\log\log m))$ and color a triple $i<j<\ell$ red precisely when $\xi(ij)=\xi(i\ell)$. The resulting red $3$-graph is uniformly sparse in a weighted sense, while the blue subhypergraph contains no $K_{q+2}^{(3)}$.

The random part $H_R$ is the union of $m$ independent random near-perfect $k$-uniform matchings. The key probabilistic step is to separate two sources of failure. With high probability, every choice of large disjoint sets $U,W_4,\ldots,W_k$ has $\Theta(mn)$ active edges across the matching layers. Conditioned on this event, an almost red embedding under any fixed capacity-respecting map would force $\Theta(mn)$ prescribed matching blocks to lie in a family of density $O((\log m)^{-1/4})$. Sequential exposure bounds the probability of this event by $\exp(-\Theta_k(mn\log\log m))$, which is sufficient for a union bound over all maps into the enlarged base palette.

The construction has three components. The robust random part $H_R$ is preserved throughout the reverse induction and, at uniformity $3$, excludes an almost red embedding by the weighted sparsity of the ordered palette. The expander part $H_E$ enforces the structure required by the reverse induction. Finally, a complete $k$-graph $H_B$ on $O_k(q)$ vertices excludes the blue case directly at uniformity $k$. This follows from a blue-clique stepping-up lemma: the base coloring contains no blue $K_{q+2}^{(3)}$, and this forbidden clique propagates through the stepping-up recursion with only a constant-factor loss in its order at each level.

With these ingredients in place, we first prove the $3$-uniform base case and then turn to the higher-uniformity theorem.

\begin{proof}[Proof of Theorem~\ref{thm:base-three}]
Set $k=3$, so $\eta=1/12$. Choose $c_0>0$ sufficiently small so that Lemma~\ref{lem:robust-random} applies with $\alpha=1$, and fix the corresponding ordered palette $\xi_m$ from Lemma~\ref{lem:ordered-palette}. Put $m=\lfloor\Delta/2\rfloor$.

By Lemma~\ref{lem:robust-random}, there is a $3$-graph $H_R$ on $[n]$ with $\Delta(H_R)\le m$ such that every map $h:[n]\to[0,M_3(m))$ with $|h^{-1}(y)|\le n/m$ admits an edge $xyz\in E(H_R)$ satisfying $\phi_m^{(3)}(\{h(x),h(y),h(z)\})=\blue$. Let $H_B$ be a copy of $K_{q+2}^{(3)}$ on a subset of $[n]$, and set $H=H_R\cup H_B$. Since $\Delta(H_B)=\binom{q+1}{2}=o(m)$, we have $\Delta(H_B)\le m$ for all sufficiently large $\Delta$, and hence
\[
        \Delta(H)\le\Delta(H_R)+\Delta(H_B)\le2m\le\Delta.
\]

We claim that $H$ has no monochromatic embedding into $\phi_m^{(3)}[n/m]$. A red embedding would contradict the defining property of $H_R$. If there were a blue embedding of $H_B$, then no two vertices of $H_B$ could have the same image, because every triple with repeated images is red by the definition of $\phi_m^{(3)}$. Hence the $q+2$ distinct images would span a blue $K_{q+2}^{(3)}$, contradicting Lemma~\ref{lem:blue-clique} in the base case $\ell=3$.

Therefore
\[
        r(H)>M_3(m)\left\lfloor\frac nm\right\rfloor.
\]
For all sufficiently large $\Delta$, we have $m\ge\Delta/3$, $\log\log m=(1+o(1))\log\log\Delta$, and $\lfloor n/m\rfloor\ge n/(2m)$. Also, $M_3(m)\ge\frac12\exp(c_0m\log\log m)$. Hence, after choosing a sufficiently small absolute constant $c>0$,
\[
        M_3(m)\left\lfloor\frac nm\right\rfloor
        \ge \frac{n}{4m}\exp(c_0m\log\log m)
        \ge 2^{c\Delta\log\log\Delta}\,n.
\]
Therefore $r(H)\ge 2^{c\Delta\log\log\Delta}n$ for all sufficiently large $\Delta$. This proves the theorem.
\end{proof}

We now pass from the base case to general $k$ by adding an expander component and a blue obstruction, and then propagating the red embedding through the reverse induction.

\begin{proof}[Proof of Theorem~\ref{main}]
The case $k=3$ is Theorem~\ref{thm:base-three}, so assume $k\ge4$. Set
\[
        \varepsilon=10^{-4k},\qquad
        \alpha_\ell=(10^3)^{-k+\ell}\quad(3\le\ell\le k),
        \qquad
        u_\ell=\lceil\alpha_\ell n\rceil.
\]
Thus $u_k=n$. We use ceilings for the set sizes in the reverse induction; the constants below have sufficient slack to absorb the resulting additive rounding errors.
Choose $c_0=c_0(k)>0$ sufficiently small so that Lemma~\ref{lem:robust-random} applies with $\alpha=\varepsilon$, and fix the base coloring $\xi_m$ supplied by Lemma~\ref{lem:ordered-palette}. Let $C_k$ be a sufficiently large constant depending only on $k$, and put $m=\lfloor\Delta/C_k\rfloor$.
For sufficiently large $\Delta$, we have $m\to\infty$ and $m\ge\Delta/(2C_k)$. Since $n\ge2^\Delta$, all lower bounds on $n$ required below hold for sufficiently large $\Delta$.

We construct three $k$-graphs on the common vertex set $[n]$.

\medskip
\noindent\emph{The robust random part.}
By Lemma~\ref{lem:robust-random}, there is a $k$-graph $H_R$ on $[n]$ with $\Delta(H_R)\le m$ satisfying the conclusion of that lemma for sets of size at least $\varepsilon n$.

\medskip
\noindent\emph{The expander part.}
Put $s=10^{6k}m$. Let $F$ be an $s$-vertex graph supplied by Lemma~\ref{lem:BHS-graph} with parameters $k$ and $\varepsilon$; write $\Delta(F)\le d_0$, where $d_0=d_0(k)$ is independent of $m$. Let $T$ be an $s$-uniform hypergraph on $[n]$ supplied by Lemma~\ref{lem:BHS-hypergraph} with parameter $\varepsilon$, and write $E(T)=\{B_1,\ldots,B_{e(T)}\}$.
For each $i\in[e(T)]$, let $F_i$ be an isomorphic copy of $F$ with vertex set $B_i$, and put $\mathcal F=\{F_i:i\in[e(T)]\}$ and $H_E=H^{(k)}(\mathcal F)$.
For a fixed vertex $x\in V(F_i)$, the number of connected $(k-1)$-subsets of $F_i$ containing $x$ is at most
\[
        (k-2)!\,d_0^{k-2}.
\]
Indeed, every such set admits an ordering
$x=v_1,v_2,\ldots,v_{k-1}$ such that, for each $j\ge2$, the vertex
$v_j$ has a neighbor among $v_1,\ldots,v_{j-1}$. Once
$v_1,\ldots,v_{j-1}$ have been chosen, there are at most
$(j-1)d_0$ choices for $v_j$, and hence the number of possible ordered
explorations is at most
\[
        \prod_{j=2}^{k-1}(j-1)d_0
        =(k-2)!\,d_0^{k-2}.
\]
Fix $x\in B_i$. If $x$ is the exceptional vertex in
Definition~\ref{def:HkF}, choose one vertex of the connected part as a
root; there are at most $s$ choices for this root and then at most
$(k-2)!\,d_0^{k-2}$ choices for the connected part. If $x$ lies in the
connected part, there are at most $s$ choices for the exceptional vertex
and at most $(k-2)!\,d_0^{k-2}$ choices for the connected part. Hence
\[
        \Delta(H^{(k)}(F_i))
        \le 2(k-2)!\,s d_0^{k-2}
        =O_k(s).
\]
Since $\Delta(T)=O_k(1)$,
\begin{equation}\label{eq:HE-degree}
        \Delta(H_E)=O_k(s)=O_k(m).
\end{equation}

\medskip
\noindent\emph{The blue obstruction.}
Recall the integers $t_3,\ldots,t_k$ defined before Lemma~\ref{lem:blue-clique}, and set $Q=(k-1)(t_k-1)+1$.
Choose any $Q$-vertex subset of $[n]$ and let $H_B$ be the complete $k$-graph on this set. By~\eqref{eq:t-explicit},
$Q=O_k(q),$
and hence
\begin{equation}\label{eq:HB-degree}
        \Delta(H_B)=\binom{Q-1}{k-1}
        =O_k(q^{k-1})
        =o(m).
\end{equation}

Finally, let $$H=H_R\cup H_E\cup H_B.$$ By~\eqref{eq:HE-degree} and~\eqref{eq:HB-degree}, choosing $C_k$ sufficiently large gives $\Delta(H)\le\Delta$.
We shall prove that there is no monochromatic embedding of $H$ into $\phi_m^{(k)}[b_k]$, where $b_k=(10^3)^{-2k+6}n/m$.
The definition of $M_3(m)$ and the recursion~\eqref{eq:M-recursion} imply that, for some constant $a_k>0$ and all sufficiently large $m$,
\[
        M_k(m)\ge\tw_{k-1}\!\bigl(a_km\log\log m\bigr).
\]
Indeed, after decreasing $a_3>0$ if necessary, we have $M_3(m)\ge\tw_2(a_3m\log\log m)$. If $M_\ell(m)\ge\tw_{\ell-1}(a_\ell m\log\log m)$, then for any fixed $0<a_{\ell+1}<a_\ell$ and all sufficiently large $m$, $M_\ell(m)-1\ge\tw_{\ell-1}(a_{\ell+1}m\log\log m)$, and hence $M_{\ell+1}(m)=2^{M_\ell(m)-1}\ge\tw_\ell(a_{\ell+1}m\log\log m)$.
We next make the final tower comparison explicit.  Put
$A=a_km\log\log m$.  Since $m=\lfloor\Delta/C_k\rfloor$, after choosing
$c_k>0$ sufficiently small we have
$c_k\Delta\log\log\Delta\le A/2$ for all sufficiently large $\Delta$.
Moreover, since $k\ge4$,
\[
 \frac{\tw_{k-1}(A)}{\tw_{k-1}(A/2)m}
 =\frac{2^{\tw_{k-2}(A)-\tw_{k-2}(A/2)}}{m}
 \longrightarrow\infty
 \qquad (\Delta\to\infty).
\]  Since
$b_k=(10^3)^{-2k+6}n/m\to\infty$, we also have
$\lfloor b_k\rfloor\ge b_k/2$.  Hence, for all sufficiently large
$\Delta$,
\begin{equation}\label{eq:final-tower-comparison}
\begin{aligned}
 M_k(m)\lfloor b_k\rfloor
 &\ge \tw_{k-1}(A)\frac{(10^3)^{-2k+6}n}{2m}\\
 &>\tw_{k-1}(A/2)n
 \ge\tw_{k-1}\!\bigl(c_k\Delta\log\log\Delta\bigr)n.
\end{aligned}
\end{equation}
Thus it suffices to rule out a monochromatic embedding of $H$ into
$\phi_m^{(k)}[b_k]$.

Suppose for contradiction that such an embedding exists. If its color is blue, then its restriction to $H_B$ is a blue monochromatic embedding of $K_Q^{(k)}$ into $\phi_m^{(k)}[b_k]$, contradicting Lemma~\ref{lem:blue-blocker}. Hence the original embedding is red. From now on, all embeddings in the reverse induction are red. To rule out this remaining case, we decrease the uniformity one step at a time. The following claim records the sets, pruned blocks, and embeddings preserved at each stage.

\begin{claim}\label{claim:induction}
Let $\ell\in[3,k]$. Then there are pairwise disjoint sets
$U^\ell,W_{\ell+1},\ldots,W_k\subseteq[n]$
and sets
$B_1^\ell,\ldots,B_{e(T)}^\ell$
with $B_i^\ell\subseteq B_i\cap U^\ell$ satisfying the following.

\medskip
\noindent\textup{(i)} $|U^\ell|=u_\ell$ and $|W_i|=u_{i-1}$ for every $i\in[\ell+1,k]$.

\medskip
\noindent\textup{(ii)} For every $i\in[e(T)]$, either $B_i^\ell=\varnothing$, or
$|B_i\cap U^\ell|\ge(\alpha_\ell-\varepsilon)s$
and
$|(B_i\setminus B_i^\ell)\cap U^\ell| \le(k-\ell)\varepsilon s.$
Furthermore,
$|\{i:B_i^\ell=\varnothing\}| \le3(k-\ell)\varepsilon e(T).$

\medskip
\noindent\textup{(iii)} Write $H_R^\ell=H_R(U^\ell;W_{\ell+1},\ldots,W_k)$ and
\[
        H_E^\ell=H^{(\ell)}\bigl(\{F_i[B_i^\ell]:i\in[e(T)]\}\bigr).
\]
Also set $b_\ell=(10^3)^{-k-\ell+6}n/m$ and $\phi^\ell=\phi_m^{(\ell)}$. Then there is a map $h^\ell:U^\ell\to[0,M_\ell(m))$ which is an almost red embedding of $H_R^\ell$ into $\phi^\ell[b_\ell]$ and a red monochromatic embedding of $H_E^\ell$ into $\phi^\ell[b_\ell]$.
\end{claim}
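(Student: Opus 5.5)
We prove Claim~\ref{claim:induction} by reverse induction on $\ell$, starting at $\ell=k$ and descending to $\ell=3$, following the Brada\v{c}--Hunter--Sudakov scheme.

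\emph{Base case $\ell=k$.} Take $U^k=[n]$, with no $W$'s, $B_i^k=B_i$, and $h^k$ the given red monochromatic embedding of $H$ into $\phi_m^{(k)}[b_k]$. Conditions (i) and (ii) are trivial, since $(k-\ell)\varepsilon=0$. For (iii), $H_R^k=H_R$ and $H_E^k=H_E$ are subgraphs of $H$.

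\emph{Inductive step $\ell\to\ell-1$ for $\ell\ge4$.} Given $h^\ell$ on $U^\ell$, define a coarsened map by $x\mapsto\delta$-information. Following BHS, we split according to the bit $\delta$. Order $U^\ell$ by $h^\ell$, breaking ties within fibers arbitrarily. Locate a threshold bit position $\delta^*$ together with a set $W_\ell\subseteq U^\ell$ of size $u_{\ell-1}$ lying ``below'' a set $U^{\ell-1}$ of size $u_{\ell-1}$. The requirement is that all of $W_\ell$ shares the top bits above $\delta^*$, while the elements of $U^{\ell-1}$ differ from $W_\ell$ first at distinct relevant bits. Concretely, we pick a single vertex $w$ (or the set $W_\ell$) so that for $x\in U^{\ell-1}$ the quantity $h^{\ell-1}(x):=\delta(h^\ell(x),h^\ell(w))$ is well defined and independent of the choice of $w\in W_\ell$.

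For an edge $S\cup\{w\}$, with $w\in W_\ell$ sitting at an end of the order, the $\delta$-sequence then becomes the $\delta$-sequence of $h^{\ell-1}(S)$ plus a maximal term at the end. For this edge to be red, either the sequence is monotone, in which case $\phi^{\ell-1}$ of the $\delta$'s is red, or it is non-monotone with argmax at an end. This gives the almost red property for $H_R^{\ell-1}=H_R^\ell(U^{\ell-1};W_\ell)$ up to the degenerate cases.

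The capacity bound $b_{\ell-1}=b_\ell/10^3$ follows because each $\delta$-value fiber corresponds to a collection of $h^\ell$-fibers whose total size we control by the choice of the split: we discard fibers that are too heavy, using that heavy $\delta$-classes are few.

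\emph{The expander part.} For $H_E$, we use Lemma~\ref{lem:BHS-hypergraph}(iii) applied to $A=U^{\ell-1}$ and $A=W_\ell$. Most blocks $B_i$ then meet $U^{\ell-1}$ in about $\alpha_{\ell-1}s$ vertices, and we set $B_i^{\ell-1}=\varnothing$ for the at most $3\varepsilon e(T)$ exceptional blocks. For the remaining blocks, Lemma~\ref{lem:BHS-graph}(ii) says that all but $\varepsilon s$ vertices of $B_i\cap U^{\ell-1}$ have $k$ neighbours in $B_i^\ell\cap W_\ell$. We delete those bad vertices to obtain $B_i^{\ell-1}$, which loses at most one further $\varepsilon s$ and so accounts for the $(k-\ell+1)\varepsilon s$ bound in (ii). Every connected $(\ell-2)$-set plus an extra vertex in $B_i^{\ell-1}$ then extends, via a neighbour in $W_\ell$, to a connected $(\ell-1)$-set plus an extra vertex, i.e.\ to an edge of $H_E^\ell$. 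That edge is red, and the same $\delta$ reduction shows that its projection is a red edge of $\phi^{\ell-1}$. This must hold genuinely, including edges with repeated images, which is why $H_E$ is required to be monochromatic rather than almost monochromatic.

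\emph{Main obstacle.} The crux is the choice of the split $(U^{\ell-1},W_\ell)$ together with the exclusion of the non-monotone configurations. We must show that, unless the desired structure exists, the red monochromatic embedding of $H_E^\ell$ forces a contradiction, namely that red edges of $H_E^\ell$ would have argmax in the middle and hence be blue. The approach is the BHS dichotomy: take the vertex $x_0$ where $\delta$ to the median is maximal, and split the ordered $U^\ell$ into two halves by that maximal bit. If both halves are large, the expander gives edges straddling the split with the maximum in the middle, which would be blue, a contradiction. If one half is small, recurse on the larger half. The $10^3$ factor losses in $\alpha_\ell$ and $b_\ell$ absorb this bisection, and the constants $\varepsilon\ll\alpha_3$ ensure that the exceptional-block counts stay within $3(k-\ell)\varepsilon e(T)$.
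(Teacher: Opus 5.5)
Your base case matches the paper, and the overall shape of your inductive step is right: split at the unique maximal $\delta$, keep the larger side, and project by $h^{\ell-1}(v)=\delta(h^\ell(v),h^\ell(w))$ against a set $W_\ell$ on the far side. The genuine gap is the fiber bound $b_{\ell-1}$.

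Your bisection argument (``if both halves are large, the expander gives a blue straddling edge'') relies on the pseudorandomness of Lemma~\ref{lem:BHS-hypergraph}, which only sees sets of size at least about $\varepsilon n$. So it only shows that each peeled-off piece has size $O(\varepsilon n)$, whereas $b_{\ell-1}=\Theta(n/m)$ is far smaller. ``Discard the heavy classes, since they are few'' does not repair this. There are at most $n_\ell/b_{\ell-1}=O(m)$ heavy classes, but nothing in your argument stops them from covering almost all of the candidate set (think of $O(1/\varepsilon)$ classes of size about $\varepsilon n$), leaving nothing for $U^{\ell-1}$.

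The missing idea is to apply your blue-edge mechanism to a single block rather than to linear-size halves. Prune $B_i^\ell$ to the set $B_i'$ of vertices of $B_i^\ell\cap I_1$ having $k$ neighbours in $B_i^\ell\cap I_2$. Then no $B_i'$ contains two vertices $v<w$ of one peeled interval $S_j=[L_j,p_j]$. Indeed, with $z_1,\dots,z_{\ell-2}$ neighbours of $v$ in $I_2$, the set $\{v,w,z_1,\dots,z_{\ell-2}\}$ is an edge of $H_E^\ell$ whose unique maximal $\delta$-value $\delta_{p_j}$ sits at position $2$, so it is blue. Pigeonhole then bounds the \emph{total} size of the heavy intervals. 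A block not deficient on their union $S$ meets $S$ in more than $\alpha_\ell s/400$ vertices, which exceeds the number $O(m)$ of heavy intervals precisely because $s=10^{6k}m$. Hence the light intervals cover at least $n_\ell/200$ vertices, and $U^{\ell-1}$ is chosen inside them. Your proposal never relates $s$ to $m$, which reflects that this step is absent.

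The same fact is needed again in the expander part, where you also skip it. An edge of $H_E^{\ell-1}$ lies in some $B_i^{\ell-1}\subseteq B_i'$, so its vertices lie in distinct intervals $S_j$. Since $h^{\ell-1}$ strictly decreases from one interval to the next, its $h^{\ell-1}$-images are pairwise distinct. Without this the projection breaks: if $h^{\ell-1}(v_1)=h^{\ell-1}(v_2)$, then $\delta(h^\ell(v_1),h^\ell(v_2))<h^{\ell-1}(v_1)$. Moreover, multisets with repetitions are not automatically red at level $\ell-1\ge4$, so you would not get a \emph{monochromatic} embedding.

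Relatedly, your dichotomy ``either the sequence is monotone, or it is non-monotone with argmax at an end; this gives the almost red property'' is a non sequitur. A red edge from the non-monotone branch says nothing about $\phi_m^{(\ell-1)}$. What you must show is that, when the $h^{\ell-1}$-images are distinct, the induced $\delta$-sequence of the $\ell$ images is exactly the strictly monotone list $h^{\ell-1}(v_1),\dots,h^{\ell-1}(v_{\ell-1})$. It is not ``the $\delta$-sequence of $h^{\ell-1}(S)$ plus a maximal term'', and only the monotone branch occurs.

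Finally, you leave open how to produce two separated linear-size sets: $I_1$, made of pieces peeled from the same side, and $I_2$, lying inside every later retained interval. The paper does this by extracting two stages of same-side steps with endpoint displacement at least $n_\ell/64$, and uses the reflection $\theta_\ell$ to reduce to right steps.
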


\begin{proof}
We prove the claim by reverse induction on $\ell$.

For $\ell=k$, take $U^k=[n]$ and $B_i^k=B_i$ for every $i\in[e(T)]$, and let $h^k$ be the original red monochromatic embedding of $H$. Then $H_R^k=H_R$ and $H_E^k=H_E$, so condition~\textup{(iii)} holds; conditions~\textup{(i)} and~\textup{(ii)} are immediate.

Now let $4\le\ell\le k$, and suppose the claim holds at level $\ell$. Put $n_\ell=|U^\ell|=u_\ell$, so $n_\ell\ge\alpha_\ell n$. After applying a common relabeling of the ambient vertex set and all the objects constructed above, we may assume that $U^\ell=[n_\ell]$ and
\[
        h^\ell(i)=x_i,\qquad
        0\le x_1\le x_2\le\cdots\le x_{n_\ell}<M_\ell(m).
\]
For $1\le i<n_\ell$, write $\delta_i=\delta(x_i,x_{i+1})$.

We first isolate two well-separated intervals. Starting with $L_1=1$ and $R_1=n_\ell$, construct nested intervals $[L_j,R_j]$ as follows. If
$R_j-L_j<n_\ell/50,$
stop. Otherwise the interval $[L_j,R_j]$ contains more than $n_\ell/50$ vertices. Since
$b_\ell<\frac{n_\ell}{50}$
for sufficiently large $m$, the fiber bound on $h^\ell$ implies that $x_{L_j},\ldots,x_{R_j}$ contain at least two distinct values. In particular, $x_{L_j}<x_{R_j}$, so Property~C gives a unique index
$p_j\in[L_j,R_j-1]$
at which $\delta_i$ is maximal on $[L_j,R_j-1]$; moreover,
\begin{equation}\label{eq:positive-stage-max}
        \delta_{p_j}=\delta(x_{L_j},x_{R_j})>0.
\end{equation}
The two candidate retained intervals are $[L_j,p_j]$ and
$[p_j+1,R_j]$. Retain one having maximum cardinality: if one is larger,
retain that one, while in the case of equal cardinalities either choice is
allowed. We call $j$ a left step when $[L_j,p_j]$ is retained, and a right
step when $[p_j+1,R_j]$ is retained. Thus, at every step, the retained
interval has at least half the cardinality of the preceding interval, and
the final interval has cardinality at least $n_\ell/100$.

We use the following elementary consequence of this procedure. There exist
$1<t_1<t_2$
such that either
\begin{equation}\label{eq:right-case}
\begin{aligned}
&t_1-1,t_2-1\text{ are right steps},\\
&L_{t_1}-L_1,\quad L_{t_2}-L_{t_1}
        \ge n_\ell/64,
\end{aligned}
\end{equation}
or the symmetric statement holds with left steps and the corresponding decreases of the right endpoints.

Indeed, put $t_0'=1$, $a_0=1$, and $b_0=n_\ell$. For $r\in [3]$, let $t_r'$ be the first stage after $t_{r-1}'$ at which either $L_{t_r'}-a_{r-1}\ge n_\ell/64$ or $b_{r-1}-R_{t_r'}\ge n_\ell/64$, and then set $a_r=L_{t_r'}$ and $b_r=R_{t_r'}$.
We verify inductively that these three stages exist and that
\begin{equation}\label{eq:quarter-retention}
        b_r-a_r+1\ge\frac{b_{r-1}-a_{r-1}+1}{4}.
\end{equation}
Put $D=n_\ell/64$ and
$N_{r-1}=b_{r-1}-a_{r-1}+1$. Before either endpoint displacement first reaches
$D$, both displacements are smaller than $D$. Hence the current interval has
cardinality at least $N_{r-1}-2D$. The step at which the threshold is
first reached retains at least half of that interval, and therefore
\[
        b_r-a_r+1
        \ge\frac{N_{r-1}-2D}{2}.
\]
Inductively, $N_{r-1}\ge n_\ell/4^{r-1}\ge n_\ell/16=4D$, so the
last display is at least $N_{r-1}/4$, proving~\eqref{eq:quarter-retention}.
The process also cannot stop before one endpoint displacement reaches $D$:
otherwise its terminal interval would still have cardinality at least
$N_{r-1}-2D\ge n_\ell/32$, whereas the stopping rule gives cardinality
at most $n_\ell/50+1$. For sufficiently large $n_\ell$ this is a
contradiction. Thus all three $t_r'$ are well-defined.

If $a_r-a_{r-1}\ge n_\ell/64$, then $t_r'-1$ is a right step; if $b_{r-1}-b_r\ge n_\ell/64$, then $t_r'-1$ is a left step. Among the three indices $r\in[3]$, at least two correspond to the same side. Taking the two corresponding stages gives~\eqref{eq:right-case} or its left-right mirror image.

The two alternatives are exact mirror images. We record the symmetry explicitly. Since $\ell\ge4$, $M_\ell(m)=2^{M_{\ell-1}(m)-1}$. Define
\[
        \theta_\ell(x)=M_\ell(m)-1-x
        \qquad(0\le x<M_\ell(m)).
\]
This is bitwise complementation in the first $M_{\ell-1}(m)-1$ binary positions. Hence
$\delta(\theta_\ell(x),\theta_\ell(y))=\delta(x,y)$
for all $x,y$, while $\theta_\ell$ reverses the numerical order. Consequently, if
$x_1\le\cdots\le x_\ell,$
then the induced $\delta$-sequence of
$\theta_\ell(x_\ell)\le\cdots\le\theta_\ell(x_1)$
is the reverse of the original one. The monotone branch therefore passes exactly the same multiset of $\delta$-values to level $\ell-1$, and in the nonmonotone branch the unique maximum moves from position $i$ to position $\ell-i$; in particular, endpoint maxima remain endpoint maxima and interior maxima remain interior maxima. Thus
\begin{equation}\label{eq:reflection-invariance}
        \phi_m^{(\ell)}
        (\{\theta_\ell(x_1),\ldots,\theta_\ell(x_\ell)\})
        =
        \phi_m^{(\ell)}(\{x_1,\ldots,x_\ell\}).
\end{equation}
Therefore, after replacing $h^\ell$ by $\theta_\ell\circ h^\ell$ and reversing the ordering of $U^\ell$ (together with the corresponding common relabeling of all ambient objects), the ordered image sequence becomes $\theta_\ell(x_{n_\ell})\le\cdots\le\theta_\ell(x_1)$. This preserves every fiber size and both red embedding properties in condition~\textup{(iii)}. The reflected nested intervals form an admissible interval process for the reversed sequence: whenever the two candidate sides have unequal cardinalities, the larger side is reflected to the larger side, and at a tie we choose the reflected side, as permitted by the construction. Thus every left step is reflected to a right step, and all endpoint separations are preserved. Hence the left-step alternative reduces to the right-step alternative by~\eqref{eq:reflection-invariance}. It is therefore enough to prove~\eqref{eq:right-case}.

Set
\[
        I_1=[1,L_{t_1}-1],\qquad
        I_2=[L_{t_1},L_{t_2}-1].
\]
Then $|I_1|,|I_2|\ge n_\ell/64$.
Let $\mathcal R\subseteq[t_1-1]$ be the set of right steps. For $j\in\mathcal R$, put
$S_j=[L_j,p_j].$
The sets $S_j$ are pairwise disjoint, appear in increasing order, and satisfy
\begin{equation}\label{eq:I1-partition}
        I_1=\bigcup_{j\in\mathcal R}S_j.
\end{equation}

For $A\subseteq[n]$ and $i\in[e(T)]$, say that $B_i$ is \emph{deficient on $A$} if
\[
        |A\cap B_i|<\frac{|A|}{n}s-\varepsilon s.
\]
We shall apply this notion only to $I_1,I_2$, to the union $S$ introduced below, and to $U^{\ell-1}$. Each of these sets has size at least $\varepsilon n$, and its complement also has size at least $\varepsilon n$: for the intervals and the union $S$ this follows from the disjoint linear-sized intervals among $I_1,I_2$, while for $U^{\ell-1}$ it follows, for example, from the disjoint set $I_2$. Since
$|([n]\setminus A)\cap B_i|-\frac{|[n]\setminus A|}{n}s =-\left(|A\cap B_i|-\frac{|A|}{n}s\right),$
Lemma~\ref{lem:BHS-hypergraph}\textup{(iii)}, applied to $[n]\setminus A$, shows that for each such $A$ at most $\varepsilon e(T)$ indices $i$ are deficient on $A$.

We now prune the sets $B_i^\ell$. If $B_i^\ell=\varnothing$, or if $B_i$ is deficient on at least one of $I_1,I_2$, set $B_i'=\varnothing$. Otherwise, let $B_i'$ consist of the vertices $v\in B_i^\ell\cap I_1$ having at least $k$ neighbors in $B_i^\ell\cap I_2$ in the graph $F_i$.

Suppose $B_i^\ell\ne\varnothing$ and $B_i$ is not deficient on any $I_j$. Then, for $j\in[2]$,
\[
\begin{aligned}
|B_i^\ell\cap I_j|
&\ge |B_i\cap I_j|-|(B_i\setminus B_i^\ell)\cap U^\ell|\\
&\ge \frac{|I_j|}{n}s-\varepsilon s-(k-\ell)\varepsilon s
>\varepsilon s,
\end{aligned}
\]
where the last inequality follows from $|I_j|\ge\alpha_\ell n/64$ and the choices of $\alpha_\ell,\varepsilon$. Lemma~\ref{lem:BHS-graph}\textup{(ii)} therefore shows that at most $\varepsilon s$ vertices of $B_i^\ell\cap I_1$ fail the neighborhood condition defining $B_i'$. By condition~\textup{(ii)} of the induction hypothesis, at most $(k-\ell)\varepsilon s$ vertices of $B_i\cap U^\ell$ were already removed when $B_i^\ell$ was formed. Adding these two losses gives
\begin{equation}\label{eq:pruning-loss}
        |(B_i\setminus B_i')\cap I_1|
        \le(k-\ell+1)\varepsilon s.
\end{equation}
Moreover,
\[
        |B_i'|
        \ge\frac{|I_1|}{n}s-\varepsilon s-(k-\ell+1)\varepsilon s
        >\frac{\alpha_\ell s}{100}
        >\varepsilon s.
\]
A set $B_i'$ can be empty only if $B_i^\ell=\varnothing$ or if $B_i$ is deficient on at least one of $I_1,I_2$. The first alternative occurs for at most $3(k-\ell)\varepsilon e(T)$ indices by the induction hypothesis, and each of the two deficiency conditions occurs for at most $\varepsilon e(T)$ indices. Hence
\begin{equation}\label{eq:empty-Biprime}
        |\{i:B_i'=\varnothing\}|
        \le(3(k-\ell)+2)\varepsilon e(T).
\end{equation}

We next show that no set $B_i'$ can occupy two vertices of the same interval $S_j$. The following schematic records the interval configuration used in the argument.

\begin{center}
\begin{tikzpicture}[x=0.8cm,y=0.8cm]
  \draw[->] (0,0)--(13,0);
  \draw[very thick] (1.2,0)--(4.4,0);
  \draw[very thick] (7.0,0)--(11.6,0);
  \draw (1.2,0.12)--(1.2,-0.12) node[below] {$L_j$};
  \draw (4.4,0.12)--(4.4,-0.12) node[below] {$p_j$};
  \draw (7.0,0.12)--(7.0,-0.12) node[below] {$L_{t_1}$};
  \draw (12.0,0.12)--(12.0,-0.12) node[below] {$L_{t_2}-1$};
  \node[above] at (2.8,0.15) {$S_j$};
  \node[above] at (9.5,0.15) {$I_2$};
  \fill (2.0,0) circle (1.6pt) node[below=4pt] {$v$};
  \fill (3.2,0) circle (1.6pt) node[below=4pt] {$w$};
  \fill (8.1,0) circle (1.6pt) node[below=4pt] {$z_1$};
  \node[below=3pt] at (9.3,0) {$\cdots$};
  \fill (10.2,0) circle (1.6pt) node[below=4pt] {$z_{\ell-2}$};
  \draw[->] (4.4,1.25)--(4.4,0.22);
  \node[above] at (4.4,1.25) {$\delta_{p_j}\text{ is the unique maximum}$};
\end{tikzpicture}
\end{center}

\medskip
\noindent\emph{Fact.}
For every $i\in[e(T)]$ and every $j\in\mathcal R$,
$|B_i'\cap S_j|\le1.$

\smallskip
Suppose instead that $v<w$ are two vertices of $B_i'\cap S_j$. Since $v\in B_i'$, choose distinct vertices
$z_1<\cdots<z_{\ell-2}$
from $B_i^\ell\cap I_2$ that are all adjacent to $v$ in $F_i$. Then
$e=\{v,w,z_1,\ldots,z_{\ell-2}\}$
is an edge of $H_E^\ell$: the $\ell-1$ vertices $v,z_1,\ldots,z_{\ell-2}$ induce a connected star in $F_i$.

By the interval construction, $\delta_{p_j}$ is the unique maximum on $[L_j,R_j-1]$. Since $j<t_1$ and $j$ is a right step, every later retained interval lies in $[p_j+1,R_j]$. In particular, $I_2\subseteq[p_j+1,R_j]$. Hence
\[
        \delta(x_v,x_w)<\delta_{p_j},
        \qquad
        \delta(x_w,x_z)=\delta_{p_j}
        \quad\text{for every }z\in I_2.
\]
Moreover, if $z<z'$ lie in $I_2$, then every consecutive $\delta$-value between $z$ and $z'$ is strictly smaller than $\delta_{p_j}$; therefore Property~C gives
\[
        \delta(x_z,x_{z'})<\delta_{p_j}.
\]
Consequently,
$\delta(x_v,x_w)<\delta_{p_j}=\delta(x_w,x_{z_1}),$
while
$\delta(x_{z_r},x_{z_{r+1}})<\delta_{p_j}$ for every $1\le r\le\ell-3$.
Thus the induced $\delta$-sequence of
$x_v,x_w,x_{z_1},\ldots,x_{z_{\ell-2}}$
has its unique maximum at the second position. By the definition of $\phi_m^{(\ell)}$, the edge $e$ is blue, contradicting the fact that $h^\ell$ is a red monochromatic embedding of $H_E^\ell$. This proves the fact.

Now put
\[
        \mathcal R_S=\{j\in\mathcal R:|S_j|\le b_{\ell-1}\},\qquad
        \mathcal R_B=\mathcal R\setminus\mathcal R_S.
\]
We claim that
\begin{equation}\label{eq:small-union}
        \left|\bigcup_{j\in\mathcal R_S}S_j\right|
        \ge\frac{n_\ell}{200}.
\end{equation}
Suppose otherwise. Since $|I_1|\ge n_\ell/64$,~\eqref{eq:I1-partition} gives
$\left|\bigcup_{j\in\mathcal R_B}S_j\right| \ge\frac{n_\ell}{200}.$
Let
$S=\bigcup_{j\in\mathcal R_B}S_j.$
Since $|S|\ge\varepsilon n$, at most $\varepsilon e(T)$ indices
$i$ are deficient on $S$.  Together with~\eqref{eq:empty-Biprime}, the
number of indices for which either $B_i'=\varnothing$ or $B_i$ is
deficient on $S$ is at most
\[
        (3(k-\ell)+3)\varepsilon e(T)<e(T),
\]
where the last inequality follows from $\varepsilon=10^{-4k}$.  Hence
there exists an index $i$ such that $B_i'\ne\varnothing$ and $B_i$ is
not deficient on $S$. By~\eqref{eq:pruning-loss},
\[
\begin{aligned}
|B_i'\cap S|
&\ge |B_i\cap S|-|(B_i\setminus B_i')\cap I_1|\\
&\ge\frac{|S|}{n}s-\varepsilon s-(k-\ell+1)\varepsilon s\\
&>\frac{\alpha_\ell s}{400}.
\end{aligned}
\]
On the other hand,
$|\mathcal R_B| \le\frac{n_\ell}{b_{\ell-1}} \le 2(10^3)^{2\ell-7}m$
for all sufficiently large $n$.
Since $s=10^{6k}m$, the preceding lower bound gives
$|B_i'\cap S|>|\mathcal R_B|$
for every $\ell\in[4,k]$. The pigeonhole principle then gives some $j\in\mathcal R_B$ with
$|B_i'\cap S_j|\ge2,$
contradicting the fact. This proves~\eqref{eq:small-union}.

Choose an arbitrary set $U^{\ell-1}\subseteq\bigcup_{j\in\mathcal R_S}S_j$ and an arbitrary set $W_\ell\subseteq I_2$ such that
\[
        |U^{\ell-1}|=|W_\ell|=u_{\ell-1}.
\]
Since $\alpha_{\ell-1}=10^{-3}\alpha_\ell$, we have
$u_{\ell-1}\le n_\ell/10^3+1$. Thus these choices are possible, for
sufficiently large $n$, by~\eqref{eq:small-union} and
$|I_2|\ge n_\ell/64$.

For $i\in[e(T)]$, define
\[
        B_i^{\ell-1}=
\begin{cases}
B_i'\cap U^{\ell-1},&
\text{if $B_i'\ne\varnothing$ and $B_i$ is not deficient on $U^{\ell-1}$},\\
\varnothing,&\text{otherwise}.
\end{cases}
\]
If $B_i'\ne\varnothing$ and $B_i$ is not deficient on $U^{\ell-1}$, then, since $U^{\ell-1}\subseteq I_1$,
\[
\begin{aligned}
|B_i'\cap U^{\ell-1}|
&\ge |B_i\cap U^{\ell-1}|-| (B_i\setminus B_i')\cap I_1|\\
&\ge (\alpha_{\ell-1}-\varepsilon)s-(k-\ell+1)\varepsilon s>0.
\end{aligned}
\]
Thus the set in the first case of the definition is nonempty. If $B_i^{\ell-1}\ne\varnothing$, then
$|B_i\cap U^{\ell-1}| \ge(\alpha_{\ell-1}-\varepsilon)s$
and, by~\eqref{eq:pruning-loss},
$|(B_i\setminus B_i^{\ell-1})\cap U^{\ell-1}| \le(k-\ell+1)\varepsilon s.$
Furthermore, by~\eqref{eq:empty-Biprime} and one more application of Lemma~\ref{lem:BHS-hypergraph}\textup{(iii)},
$|\{i:B_i^{\ell-1}=\varnothing\}| \le3(k-\ell+1)\varepsilon e(T).$
Thus conditions~\textup{(i)} and~\textup{(ii)} hold at level $\ell-1$.

It remains to construct the new embedding and verify the required color conditions. Put $j^*=L_{t_1}$ and $x^*=x_{j^*}$. For $v\in U^{\ell-1}\subseteq I_1$, define
\begin{equation}\label{eq:new-h}
        h^{\ell-1}(v)=\delta(x_v,x^*)
        =\max_{z\in[v,j^*)}\delta_z.
\end{equation}
Since $x_v,x^*<M_\ell(m)=2^{M_{\ell-1}(m)-1}$, the map $h^{\ell-1}$ takes values in $[0,M_{\ell-1}(m))$.

For $j\in\mathcal R_S$ and $v\in S_j$, the interval $[v,j^*)$ contains $p_j$, so $h^{\ell-1}(v)\ge\delta_{p_j}$. If $j'>j$ and $v'\in S_{j'}$, then $[v',j^*)$ is contained in the retained interval after the right step $j$ and therefore avoids the unique maximum $\delta_{p_j}$; all its $\delta$-values are strictly smaller than $\delta_{p_j}$. Consequently,
\begin{equation}\label{eq:strict-between-blocks}
        j<j',\ v\in S_j,\ v'\in S_{j'}
        \quad\Longrightarrow\quad
        h^{\ell-1}(v)>h^{\ell-1}(v').
\end{equation}
Since every $S_j$ with $j\in\mathcal R_S$ has size at most $b_{\ell-1}$,~\eqref{eq:strict-between-blocks} implies
\[
        |(h^{\ell-1})^{-1}(y)|\le b_{\ell-1}
        \qquad\text{for every }y.
\]

We now verify explicitly that $h^{\ell-1}$ is an almost red embedding of $H_R^{\ell-1}$. Let
$e=\{v_1<\cdots<v_{\ell-1}\}\in E(H_R^{\ell-1}).$
By the definition of lower uniformity, there is a vertex
$v_\ell\in W_\ell\subseteq I_2$
such that
$e\cup\{v_\ell\}\in E(H_R^\ell).$
The values in~\eqref{eq:new-h} are nonincreasing as the vertex index increases. If
$h^{\ell-1}(v_1),\ldots,h^{\ell-1}(v_{\ell-1})$
are not all distinct, there is nothing to prove, since we seek only an almost red embedding. Assume therefore that they are pairwise distinct. Then
\begin{equation}\label{eq:lower-strict-dec}
        h^{\ell-1}(v_1)>
        h^{\ell-1}(v_2)>
        \cdots>
        h^{\ell-1}(v_{\ell-1}).
\end{equation}
For $1\le r\le\ell-2$,~\eqref{eq:new-h} and~\eqref{eq:lower-strict-dec} imply that the maximum of $\delta_z$ on $[v_r,j^*)$ is already attained before $v_{r+1}$; hence
$\delta(x_{v_r},x_{v_{r+1}}) =h^{\ell-1}(v_r).$
Moreover, $t_1-1$ is a right step and
$p_{t_1-1}=L_{t_1}-1=j^*-1.$
By~\eqref{eq:positive-stage-max},
$\delta_{j^*-1}>0.$
Since $v_{\ell-1}<j^*$, equation~\eqref{eq:new-h} gives
$h^{\ell-1}(v_{\ell-1})\ge\delta_{j^*-1}>0.$
Also $v_\ell\in I_2$, and every $\delta_z$ with $z\in[j^*,L_{t_2})$ is strictly smaller than $\delta_{j^*-1}\le h^{\ell-1}(v_{\ell-1})$. Therefore
$\delta(x_{v_{\ell-1}},x_{v_\ell}) =h^{\ell-1}(v_{\ell-1})>0.$
Thus the induced $\delta$-sequence of
$x_{v_1},x_{v_2},\ldots,x_{v_{\ell-1}},x_{v_\ell}$
is exactly the strictly decreasing sequence
$h^{\ell-1}(v_1),\ldots,h^{\ell-1}(v_{\ell-1}).$
By~\eqref{eq:lower-strict-dec} and the positivity of the final term just proved, every term of this strictly decreasing induced $\delta$-sequence is positive. Consequently no two consecutive images are equal; since the images are nondecreasing with the vertex order, the $\ell$ images under $h^\ell$ are pairwise distinct. Since $h^\ell$ is an almost red embedding of $H_R^\ell$,
\[
        \phi_m^{(\ell)}
        (\{x_{v_1},\ldots,x_{v_\ell}\})=\red.
\]
The induced $\delta$-sequence is monotone, and therefore the stepping-up rule gives
\[
\begin{aligned}
\red
&=\phi_m^{(\ell)}
        (\{x_{v_1},\ldots,x_{v_\ell}\})\\
&=\phi_m^{(\ell-1)}
        (\{h^{\ell-1}(v_1),\ldots,h^{\ell-1}(v_{\ell-1})\}).
\end{aligned}
\]
This proves that $h^{\ell-1}$ is an almost red embedding of $H_R^{\ell-1}$.

Finally, let
$e=\{v_1<\cdots<v_{\ell-1}\}\in E(H_E^{\ell-1}).$
Then $e\subseteq B_i^{\ell-1}\subseteq B_i'$ for some $i$, and some $\ell-2$ vertices of $e$ induce a connected subgraph of $F_i$. Choose one vertex $v$ of this connected set. Since $v\in B_i'$, it has at least $k$ neighbors in $B_i^\ell\cap I_2$; choose one such neighbor
$v_\ell\in B_i^\ell\cap I_2.$
Then
$e\cup\{v_\ell\}\in E(H_E^\ell).$
By the fact, no two vertices of $e$ belong to the same $S_j$. Hence~\eqref{eq:strict-between-blocks} gives
$h^{\ell-1}(v_1)> \cdots> h^{\ell-1}(v_{\ell-1}).$
For each $1\le r\le\ell-2$, the strict inequality above and~\eqref{eq:new-h} imply that the maximum of $\delta_z$ on $[v_r,j^*)$ is attained before $v_{r+1}$, and therefore
$\delta(x_{v_r},x_{v_{r+1}}) =h^{\ell-1}(v_r).$
Since $v_\ell\in I_2$ and every $\delta_z$ with $z\in[j^*,L_{t_2})$ is strictly smaller than $\delta_{j^*-1}$, we obtain
$\delta(x_{v_{\ell-1}},x_{v_\ell}) =h^{\ell-1}(v_{\ell-1}).$
Thus the induced $\delta$-sequence of
$x_{v_1},\ldots,x_{v_{\ell-1}},x_{v_\ell}$
is exactly the strictly decreasing sequence
$h^{\ell-1}(v_1),\ldots,h^{\ell-1}(v_{\ell-1})$. Since $h^\ell$ is a red monochromatic embedding of $H_E^\ell$,
\[
        \phi_m^{(\ell)}
        (\{x_{v_1},\ldots,x_{v_{\ell-1}},x_{v_\ell}\})=\red.
\]
The monotone branch of the stepping-up rule now yields
\[
\begin{aligned}
\red
&=\phi_m^{(\ell)}
        (\{x_{v_1},\ldots,x_{v_{\ell-1}},x_{v_\ell}\})\\
&=\phi_m^{(\ell-1)}
        (\{h^{\ell-1}(v_1),\ldots,h^{\ell-1}(v_{\ell-1})\}).
\end{aligned}
\]
Thus $h^{\ell-1}$ is a red monochromatic embedding of $H_E^{\ell-1}$. This verifies condition~\textup{(iii)} and completes the induction.
\end{proof}

The reverse induction has now reached the terminal $3$-uniform level, where the robust random lemma gives the final contradiction. We apply Claim~\ref{claim:induction} with $\ell=3$. There are pairwise disjoint sets $U^3,W_4,\ldots,W_k$ with $|U^3|=u_3$ and $|W_i|=u_{i-1}$, together with a map
$h^3:U^3\to[0,M_3(m))$
which is an almost red embedding of
$H_R^3=H_R(U^3;W_4,\ldots,W_k)$
into $\phi_m^{(3)}[b_3]$.

All these sets have size at least $\alpha_3n\ge\varepsilon n$. Moreover,
\[
        b_3=(10^3)^{-k+3}\frac nm
        =\frac{\alpha_3n}{m}
        \le\frac{u_3}{m}.
\]
Thus the fiber bound in Claim~\ref{claim:induction} gives
\[
        |(h^3)^{-1}(y)|\le b_3\le\frac{|U^3|}{m}
        \qquad\text{for every }y.
\]
Lemma~\ref{lem:robust-random} therefore gives an edge whose three $U^3$-images form a blue triple. These images are necessarily pairwise distinct, since every triple with a repeated image is red at the base level, contradicting the fact that $h^3$ is an almost red embedding. Therefore no monochromatic embedding of $H$ into $\phi_m^{(k)}[b_k]$ exists, and
\[
        r(H)>M_k(m)\lfloor b_k\rfloor.
\]
The comparison~\eqref{eq:final-tower-comparison} now yields
\[
        r(H)>M_k(m)\lfloor b_k\rfloor
        >\tw_{k-1}\!\bigl(c_k\Delta\log\log\Delta\bigr)n.
\]
Finally, by decreasing $c_k$ if necessary, we may ensure that $1/c_k$ exceeds every threshold on $\Delta$ used above; this ensures that the hypothesis $\Delta\ge1/c_k$ implies every lower bound on $\Delta$ used in the proof. This proves the theorem.
\end{proof}

\section{Concluding remark}\label{sec:conclusion}

Theorem~\ref{main} gives
\[
        r(H)\ge\tw_{k-1}\!\bigl(c_k\Delta\log\log\Delta\bigr)n
\]
for bounded-degree $k$-graphs. The proof separates three roles that were previously intertwined: $H_R$ supplies a robust red obstruction at the ordered $3$-uniform base level, $H_E$ provides the structural input for the reverse induction, and the small complete hypergraph $H_B$ excludes the blue branch directly at uniformity $k$. At the level of tower height, the remaining gap to Problem~\ref{problem:CFS} is one full level. It would be particularly interesting to determine whether the robust ordered-base construction can be strengthened enough to support a base scale of order $\tw_3(c\Delta)$, which would match the tower height predicted by Conlon, Fox and Sudakov.

\section*{Declarations}

\noindent\textbf{Funding} The research of Qizhong Lin is partially supported by the National Key R\&D Program of China (Grant No.~2023YFA1010202) and NSFC (No.~12571361).

\noindent\textbf{Ethics declaration} Not applicable.

\noindent\textbf{Author contributions} Chunchao Fan performed the formal analysis, carried out the proofs, and wrote the initial draft of the manuscript. Qizhong Lin supervised the research, provided guidance on the overall strategy, and contributed to the refinement of the technical arguments. Both authors reviewed, revised, and approved the final manuscript.

\noindent\textbf{Data availability} Not applicable.

\noindent\textbf{Competing interests} The authors declare that they have no competing interests.

\section*{Declaration on the use of AI}
The authors utilized ChatGPT in proving Theorem \ref{thm:base-three} and in enhancing the exposition of this paper. All mathematical arguments, results, and conclusions have been reviewed and verified by the authors, who assume full responsibility for the content of this work.

\end{document}